\documentclass[12pt]{article}

\usepackage[a4paper,margin=1in]{geometry}
\usepackage[T1]{fontenc}
\usepackage[utf8]{inputenc}
\usepackage{lmodern}
\usepackage{amsmath,amssymb,amsthm,amsfonts,mathtools}
\usepackage{mathrsfs}
\usepackage{array,booktabs,multirow}
\usepackage{graphicx}
\usepackage{tikz,cite}
\usetikzlibrary{arrows.meta,positioning,calc,fit,shapes.misc}
\usepackage{algorithm}
\usepackage{algpseudocode}
\usepackage{hyperref}
\usepackage{enumitem}
\usepackage{float}
\usepackage{caption}

\everymath{\displaystyle}
\renewenvironment{proof}{{\bf \noindent Proof.}}{\qed}
\usetikzlibrary{matrix, calc, positioning}

\hypersetup{
	colorlinks=true,
	linkcolor=blue,
	citecolor=blue,
	urlcolor=blue
}

\newtheorem{theorem}{Theorem}[section]
\newtheorem{proposition}[theorem]{Proposition}
\newtheorem{lemma}[theorem]{Lemma}
\newtheorem{corollary}[theorem]{Corollary}

\newtheorem{openproblem}[theorem]{Open Problem}
\theoremstyle{definition}
\newtheorem{definition}[theorem]{Definition}
\newtheorem{example}[theorem]{Example}
\newtheorem{remark}[theorem]{Remark}

\newcommand{\Z}{\mathbb{Z}}
\newcommand{\F}{\mathbb{F}}
\newcommand{\LL}{\mathbb{L}}
\newcommand{\PP}{\mathbb{P}}
\newcommand{\calL}{\mathcal{L}}
\newcommand{\calT}{\mathcal{T}}
\newcommand{\calD}{\mathcal{D}}
\newcommand{\calO}{\mathcal{O}}
\newcommand{\calC}{\mathcal{C}}
\newcommand{\one}{\mathbf{1}}
\newcommand{\I}{\mathbf{I}}
\newcommand{\J}{\mathbf{J}}
\renewcommand{\O}{\mathbf{O}}
\newcommand{\rank}{\operatorname{rank}}
\newcommand{\nullity}{\operatorname{nullity}}
\newcommand{\Ker}{\operatorname{Ker}}
\newcommand{\Image}{\operatorname{Im}}
\newcommand{\tr}{\operatorname{tr}}
\newcommand{\diam}{\operatorname{diam}}
\newcommand{\girth}{\operatorname{girth}}

\title{Spectrum of zero-divisor graphs of Lipschitz quaternion rings modulo $n$}
\author{{\small Bilal Ahmad Rather}\\[2mm]
	{\small School of Mathematics and Statistics, Shandong University of Technology, Zibo 255049, China}\\
	\texttt{bilahamadrr@gmail.com}
}
\date{}

\begin{document}
	\pagestyle{myheadings} \markboth{Bilal Ahmad Rather}{zero-divisor graphs of Lipschitz quaternion rings}
	\maketitle
	
	\begin{abstract}
		We study the adjacency spectra of zero-divisor graphs associated with Lipschitz quaternion rings modulo $n$. For an odd prime $p$, the identification $\mathbb{L}_p\cong M_2(\mathbb{F}_p)$ gives a kernel--image partition of the nonzero singular matrices and an explicit block representation of the adjacency matrix. This structure leads to a spectral decomposition, closed expressions for the spectral radius, and information on rank, nullity, eigenvalue multiplicities, and graph energy. For powers of $2$, we determine the graph at $n=2$ and use square-zero ideals in $\mathbb{L}_{2^t}$ to construct large complete subgraphs and obtain corresponding spectral and energy bounds. The resulting framework provides a unified structural approach to the odd-prime and two-adic cases and reduces large adjacency-matrix computations to substantially smaller algebraic models.
	\end{abstract}

	\noindent\textbf{2020 Mathematics Subject Classification.}
	Primary 05C50; Secondary 16P10, 05C69, 15A18.
	
	\noindent\textbf{Keywords.}
	adjacency matrix; zero-divisor graph; Lipschitz quaternion ring; spectral radius; nullity.
	
	\section{Introduction}
	
	Graphs attached to rings provide a useful way to translate algebraic annihilation relations into combinatorial structure. Beck's work initiated this viewpoint \cite{beck1988}, Anderson and Livingston introduced the now-standard zero-divisor graph for commutative rings \cite{anderson1999}, and Redmond extended the construction to noncommutative rings \cite{redmond2002}. Matrix rings and quaternion rings are natural test cases because their multiplication carries substantial internal structure that can be reflected in the graph.
	Zero-divisor graphs have since been studied through connectivity, diameter, girth, domination, direct products, and several related invariants \cite{akbari2006,akbari2007,axtell2006,bozic2009,miguel2013,wu2005}. Here we focus on a complementary question: what additional information becomes visible when the graph is treated through its adjacency matrix?
	
	The adjacency matrix records walk counts and supports spectral, rank, nullity, energy, and equitable-partition methods; see \cite{cvetkovic1995,godsil2001,brouwer2012,horn2013}.
	For graph families coming from finite rings, repeated rows, block patterns, and equitable partitions can compress a large adjacency problem to a much smaller matrix. This is particularly useful in noncommutative examples, where brute-force multiplication becomes expensive as the ring grows. Related constructions for Gaussian integers modulo $n$ and other zero-divisor graphs appear in \cite{abuosba2008,abuosba2011,bilal}.
	
	Among the most natural noncommutative examples are the Lipschitz quaternion rings modulo $n$. Consider
	$$
	\LL_n=\Z_n[i,j,k]
	=\{a+bi+cj+dk:\ a,b,c,d\in \Z_n\},
	$$
	with the usual quaternion relations inherited modulo $n$. Grau, Miguel, and Oller-Marc\'en studied the zero-divisor graphs of $\LL_n$ and obtained formulas for several graph invariants, including the number of vertices, diameter, girth, and domination number \cite{grau2017}. Their work also records the structural distinction between odd moduli and powers of $2$. Related zero-divisor graphs of matrix rings and other finite rings have been investigated in \cite{akbari2006,akbari2007,bozic2009,miguel2013,wu2005}. The aim here is more specific: we use the adjacency matrix to expose the repeated block structure created by the ring geometry and then derive spectral consequences from that structure.
	
	For an odd prime $p$, the isomorphism $\LL_p\cong M_2(\F_p)$ classifies every nonzero zero divisor by the pair consisting of its kernel and image, both of which are projective lines in $\F_p^2$. The resulting type partition turns adjacency into an incidence condition on $\PP^1(\F_p)$. We obtain an explicit Kronecker block model, exact formulas for the two degrees and the edge count, and exact clique and independence numbers. The block model first isolates large forced $0$- and $-1$-eigenspaces and leaves a reduced matrix $B_p$ of order $(p+1)^2$. A second symmetry decomposition of this reduced space diagonalizes $B_p$ completely, so the full adjacency spectrum, nullity, rank, spectral radius, and energy are all explicit.
	
	For powers of $2$, the same matrix-ring reduction is unavailable. We first determine the graph at $n=2$ and identify it with the friendship graph $F_3$. For $t\ge2$, we use the square-zero ideal $2^{\lceil t/2\rceil}\LL_{2^t}$ to produce a complete principal block in the adjacency matrix. Its size is explicit, which gives quantitative lower bounds for the clique number, spectral radius, number of edges, negative inertia, rank, and graph energy. We also exploit the nested ideals $2^a\LL_{2^t}$ and $2^b\LL_{2^t}$ when $a+b\ge t$; suitable disjoint layers are completely joined and yield large complete bipartite subgraphs. In particular, for odd $t$ this improves the spectral-radius bound obtained from the square-zero clique alone.
	
	For clarity, we collect here the symbols that occur before their formal development later in the paper. We write $\Z_n=\mathbb Z/n\mathbb Z$, and, for a prime $p$, $\F_p$ is the field with $p$ elements. The ring of Lipschitz quaternions modulo $n$ is $\LL_n$, while $G_n=\Gamma(\LL_n)$ denotes its undirected zero-divisor graph and $A_n=A(G_n)$ its adjacency matrix. The quantities $\rho(A_n)$ and $\nullity(A_n)$ denote the spectral radius and nullity. For odd $p$, $\PP^1(\F_p)$ is the projective line of one-dimensional subspaces of $\F_p^2$, $\calL$ is its set of lines, and $\calT=\calL\times\calL$ is the set of kernel--image types. The matrix $H_p$ records incidence between such ordered pairs, whereas $D_p$ marks the diagonal types $(L,L)$. As usual, $\J_m$ and $\I_m$ are the all-ones and identity matrices of order $m$, $\otimes$ denotes the Kronecker product, and $\sim$ denotes permutation similarity. Finally, $B_p=(p-1)H_p-D_p$ is the reduced type-space matrix obtained by restricting $A_p$ to vectors that are constant on each type class.
	
	The problem considered throughout is therefore the following. For $G_n=\Gamma(\LL_n)$, determine structural information about $A_n=A(G_n)$ that can be read from the ring and use it to compute or bound spectral invariants such as $\rho(A_n)$, $\nullity(A_n)$, special eigenvalue multiplicities, and energy. For odd primes we prove
	$$
	A_p\sim H_p\otimes \J_{p-1}-D_p\otimes \I_{p-1},
	$$
	where $H_p$ records incidence among ordered pairs of projective lines and $D_p$ records the diagonal types. The decomposition compresses the adjacency problem before any dense matrix is formed.
	
	\medskip
	The article is organized as follows: Section~\ref{sec:preliminaries} fixes notation and recalls the structural facts used later. Section~\ref{sec:oddprime} develops the projective type partition, the block form of $A_p$, and exact clique and independence numbers. Section~\ref{sec:spectral} derives the invariant-subspace decomposition, completely diagonalizes the reduced matrix $B_p$, and obtains exact multiplicities, rank, and spectral radius. Section~\ref{sec:twopower} treats the exact case $n=2$, the square-zero clique, annihilating ideal layers, and inertia bounds for $2^t$. Section~\ref{sec:algorithm} separates the compressed type-incidence construction from the cost of materializing the full dense matrix and records numerical examples. Section~\ref{sec:energy} develops the energy decomposition, proves an exact odd-prime formula and hyperenergeticity, and retains quantitative lower bounds useful for comparison. Section~\ref{con} summarizes the results and lists several open problems.
	
	\section{Preliminaries and notation}\label{sec:preliminaries}
	
	We recall the basic notation used throughout the paper. For $n\ge 2$, let
	$$
	\LL_n=\Z_n[i,j,k]
	=\{a+bi+cj+dk:\ a,b,c,d\in \Z_n\}
	$$
	be the ring of Lipschitz quaternions modulo $n$.
	Its undirected zero-divisor graph $G_n=\Gamma(\LL_n)$ is the simple graph whose vertices are the nonzero zero divisors of $\LL_n$, with distinct vertices $x,y$ adjacent if and only if $xy=0$ or $yx=0$. In this paper, a nonzero element $x$ of a possibly noncommutative ring is called a zero divisor if $xy=0$ or $yx=0$ for some nonzero $y$. Since $G_n$ is finite and simple, its adjacency matrix $A_n=A(G_n)$ is a symmetric $(0,1)$ matrix with zero diagonal. We write $\rho(A_n)$ for the spectral radius, $\nullity(A_n)$ for the nullity, and $\chi_{A_n}(\lambda)=\det(\lambda I-A_n)$ for the characteristic polynomial.
	
	For a graph $G$ with adjacency matrix $A(G)$, the \emph{nullity} of $G$ is $\nullity(A(G))$, and the \emph{rank} is $\rank(A(G))$. A partition $V(G)=V_1\sqcup\cdots\sqcup V_r$ is called \emph{equitable} if every vertex in $V_i$ has the same number of neighbors in $V_j$ for each pair $i,j$. The associated $r\times r$ quotient matrix will be denoted by $Q$, see \cite{godsil2001,brouwer2012}. 
	
	\medskip
	The following facts are standard and will be used repeatedly.
	\begin{proposition}[\cite{lam2001,grau2015,grau2017}]\label{prop:finitefacts}
		Let $R$ be a finite ring with identity. Then the following holds.
		\begin{enumerate}[label=\textup{(\roman*)},noitemsep]
			\item Every nonunit of $R$ is a zero divisor.
			\item If $R=R_1\oplus R_2$, then a nonzero element $(a_1,a_2)$ is a zero divisor if and only if at least one coordinate is a nonunit; equivalently, at least one coordinate is either $0$ or a nonzero zero divisor in its factor ring.
			\item If $p$ is an odd prime, then $\LL_p\cong M_2(\F_p)$.
		\end{enumerate}
	\end{proposition}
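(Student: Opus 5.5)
The three parts are independent, and I would prove them in order, using only the finiteness of $R$ for (i) and (ii) and an explicit construction for (iii).

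For (i), let $x\in R$ be a nonunit and consider the left multiplication map $\lambda_x:R\to R$, $r\mapsto xr$, together with the right multiplication map $\rho_x:r\mapsto rx$. Since $R$ is finite, $\lambda_x$ is injective if and only if it is surjective. If both $\lambda_x$ and $\rho_x$ were injective, both would be surjective. Then there would be $y,z$ with $xy=1=zx$, so $z=z(xy)=(zx)y=y$, and $x$ would be a unit, which is a contradiction. Hence one of the two maps has a nontrivial kernel, so $xy=0$ or $yx=0$ for some nonzero $y$. If $x\neq0$, this is exactly the definition of a zero divisor adopted in the paper.

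For (ii), units of $R_1\oplus R_2$ are exactly the pairs whose coordinates are both units, since multiplication is coordinatewise and the identity is $(1,1)$. By (i), a nonzero $(a_1,a_2)$ is a zero divisor if and only if it is a nonunit, that is, if and only if some coordinate is a nonunit. For the converse direction one can also argue directly. If, say, $a_1$ is a nonunit, then either $a_1=0$, in which case $(a_1,a_2)(1,0)=0$, or $a_1$ annihilates some nonzero $y_1$ on one side, in which case $(y_1,0)$ annihilates $(a_1,a_2)$ on that side. Applying (i) again inside $R_1$ shows that a coordinate is a nonunit exactly when it is $0$ or a nonzero zero divisor in its factor, which gives the equivalent reformulation.

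For (iii), the main step is to produce a solution of $u^2+v^2+1=0$ in $\F_p$. I would take this from a counting argument. The sets $\{u^2\}$ and $\{-1-v^2\}$ each have $(p+1)/2$ elements, so by pigeonhole they intersect. Then define
\[
\phi(i)=\begin{pmatrix}0&-1\\1&0\end{pmatrix},\qquad
\phi(j)=\begin{pmatrix}u&v\\v&-u\end{pmatrix},
\]
and $\phi(k)=\phi(i)\phi(j)$. Next I would check the relations $\phi(i)^2=\phi(j)^2=-\I$ and $\phi(i)\phi(j)=-\phi(j)\phi(i)$; these identities are routine. This makes $\phi$ a ring homomorphism $\LL_p\to M_2(\F_p)$. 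Both rings have $p^4$ elements, so it remains to show injectivity. For that I would verify that $\I,\phi(i),\phi(j),\phi(k)$ are $\F_p$-linearly independent. Using trace pairing works: these four matrices are pairwise trace-orthogonal and satisfy $\tr(X^2)=\pm2\neq0$ because $p$ is odd. So a vanishing linear combination forces every coefficient to be zero, and $\phi$ is an isomorphism.

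I expect (iii) to be the main obstacle, specifically the existence of the matrix $\phi(j)$, which needs the pigeonhole step. Parts (i) and (ii) are formal consequences of finiteness.
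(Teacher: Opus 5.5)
The paper does not prove this proposition; all three parts are quoted from the cited literature. Your write-up is therefore a self-contained substitute for those citations rather than a variant of an argument in the text, and it is correct.

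\begin{itemize}
\item In (i), the finiteness trick ($r\mapsto xr$ and $r\mapsto rx$ are injective iff surjective, then $z=z(xy)=(zx)y=y$) is exactly what the paper's definition ($xy=0$ or $yx=0$ for some $y\neq0$) needs. You also correctly exclude $x=0$, which that definition does not count.
\item In (iii) the explicit map checks out. From $u^2+v^2=-1$ you get $\phi(j)^2=(u^2+v^2)\I=-\I$ and $\phi(i)\phi(j)=\begin{pmatrix}-v&u\\u&v\end{pmatrix}=-\phi(j)\phi(i)$. The trace-pairing argument uses only $2\neq0$ in $\F_p$.
\end{itemize}

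The usual structural alternative for odd $p$ runs as follows. $\LL_p$ is a $4$-dimensional central simple $\F_p$-algebra, Wedderburn's little theorem forbids a noncommutative finite division ring, and Artin--Wedderburn then forces $\LL_p\cong M_2(\F_p)$. That route is shorter, but it presupposes central simplicity and gives no concrete identification. Your pigeonhole-plus-explicit-matrices route is elementary and produces an isomorphism you can actually use to compute kernels and images of quaternions.

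Three small points each deserve a line.

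First, in (ii) the direction ``zero divisor $\Rightarrow$ nonunit'' does not come from (i). It is the trivial fact that a unit $w$ cannot satisfy $wy=0$ or $yw=0$ with $y\neq0$: multiply by $w^{-1}$.

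Second, the reformulation ``some coordinate is $0$ or a nonzero zero divisor'' tacitly needs $R_1,R_2\neq0$. In the zero ring $0=1$ is a unit, and your witness $(1,0)$ would itself be $0$. The statement carries the same hidden hypothesis, so just state it.

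Third, ``the relations make $\phi$ a ring homomorphism'' should be justified as follows.
\begin{itemize}
\item Extend $\phi$ $\F_p$-linearly from the basis $1,i,j,k$, with $\phi(1)=\I$.
\item Every product of two basis elements of $\LL_p$ (e.g.\ $k^2=-1$, $jk=i$, $ki=j$) follows from $i^2=j^2=-1$ and $ij=-ji=k$ by associativity alone.
\item The images satisfy the same relations in the associative ring $M_2(\F_p)$.
\item So $\phi$ preserves all sixteen basis products, and it is multiplicative by bilinearity.
\end{itemize}
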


	\medskip
	The next theorem summarizes graph invariants already known for $G_n$.
	\begin{theorem}[\cite{grau2017}]\label{thm:known}
		Let $n\ge 2$. Then the following holds.
		\begin{enumerate}[label=\textup{(\roman*)},noitemsep]
			\item If $p$ is an odd prime, then $|V(G_p)|=p^3+p^2-p-1,$ and if $t\ge 1$, then
			$|V(G_{2^t})|=2^{4t-1}-1.$ 
			\item If $n$ is a prime power, then $\diam(G_n)=2$, otherwise $\diam(G_n)=3$.
			\item For every $n\ge 2$, we have  $\girth(G_n)=3$.
			\item If $t\ge 1$, then the domination number of $G_{2^t}$ is $1$. If $p$ is an odd prime, then the domination number of $G_p$ is $p+1$.
		\end{enumerate}
	\end{theorem}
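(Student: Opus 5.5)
Theorem~\ref{thm:known} collects results of \cite{grau2017}, so the plan is to recover each item from Proposition~\ref{prop:finitefacts} and the structure of $\LL_n$, treating odd primes through $M_2(\F_p)$ and powers of $2$ through the norm map.

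\emph{Vertex counts.} For odd $p$, Proposition~\ref{prop:finitefacts}(iii) identifies $\LL_p$ with $M_2(\F_p)$. By Proposition~\ref{prop:finitefacts}(i), the vertices are exactly the nonzero singular matrices. Their number is $p^4-|GL_2(\F_p)|-1$. Since $|GL_2(\F_p)|=(p^2-1)(p^2-p)$, this equals $p^4-(p^4-p^3-p^2+p)-1=p^3+p^2-p-1$.

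For $n=2^t$, the units of $\LL_{2^t}$ are the quaternions of odd norm $a^2+b^2+c^2+d^2$. Odd norm happens exactly when an odd number of the coordinates $a,b,c,d$ are odd. Parity of the norm depends only on the coordinates modulo $2$, and exactly half of the parity patterns in $(\Z_2)^4$ have odd weight. Hence there are $2^{4t-1}$ units, and the number of nonzero nonunits is $2^{4t}-2^{4t-1}-1=2^{4t-1}-1$.

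\emph{Diameter and girth.} Write $n=\prod p_i^{e_i}$. By the Chinese remainder theorem, $\LL_n\cong\bigoplus \LL_{p_i^{e_i}}$, so Proposition~\ref{prop:finitefacts}(ii) describes the vertex set.

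For a prime power we show $\diam=2$. In $M_2(\F_p)$, take two singular matrices $x,y$. Choose a nonzero matrix $z$ whose image lies in $\ker x$ and whose kernel contains $\mathrm{Im}\, y$; such a rank-one $z$ always exists. Then $xz=0$ and $zy=0$, so $x$ and $y$ have a common neighbor. For odd $p^e$, lift this via the reduction $\LL_{p^e}\to M_2(\Z_{p^e})$ and multiply by suitable powers of $p$. For $2^t$, use the element $2^{t-1}(1+i)$ or an analogous annihilator to produce a common neighbor. The diameter is not $1$, because some vertices are non-adjacent (for example, two idempotents with different kernels).

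For a non-prime-power $n$, consider the elements $(u,0)$ and $(0,u')$ paired with nonunit coordinates. One checks that their distance is at least $3$, and a path of length $3$ always exists through elements with one zero coordinate. This gives $\diam=3$.

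For the girth, exhibit a triangle. In $M_2(\F_p)$ take $E_{12}$, $2E_{12}$, and $E_{11}-E_{22}+E_{12}$ adjusted suitably, or more simply three distinct nonzero scalar multiples of a square-zero matrix. Such multiples pairwise multiply to $0$, and at least three exist once the ring is large enough. For $n=2$ and other small cases, check directly.

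\emph{Domination.} For $2^t$, the element $2^{t-1}(1+i)$ annihilates every nonunit on one side, so it is a dominating vertex; this is the main verification and requires a norm computation. For odd $p$, a dominating set must cover all kernel and image lines, which forces a lower bound of $p+1$. The matrices with prescribed kernel $L$ and image $L$, one for each $L\in\PP^1(\F_p)$, attain this bound.

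I expect the hardest step to be the two-adic claims, namely the existence of the universal annihilator and the diameter argument. These rest on norm computations modulo $2^t$, where the matrix model is unavailable.
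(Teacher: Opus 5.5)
The paper gives no proof of this theorem; it is quoted from \cite{grau2017}, so your argument has to stand on its own. The vertex counts, the odd-prime diameter argument and the dominating set $\{A_L\in\calC_{L,L}\}$ are fine, and your $p^{e-1}$-lifting idea for odd prime powers is right in spirit.

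The two-adic step you flagged as hardest fails as written: $z=2^{t-1}(1+i)$ is \emph{not} a dominating vertex. Indeed $(1+j)z=2^{t-1}(1+i+j-k)$ and $z(1+j)=2^{t-1}(1+i+j+k)$ are both nonzero in $\LL_{2^t}$, so $z$ is not adjacent to the vertex $1+j$. Already for $t=1$, the element $1+i=u$ has degree $2$ in $G_2\cong F_3$ (Theorem~\ref{thm:n2}).

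The correct choice is $z=2^{t-1}(1+i+j+k)$, i.e.\ $2^{t-1}$ times a lift of the centre $uv$ of $F_3$. Since $2^{t-1}$ is central and $2^{t-1}r=0$ exactly when $r\in 2\LL_{2^t}$, one has $xz=0$ if and only if $\bar x\,uv=0$ in $\LL_2$. For a nonunit $x$ this holds: $\bar x$ has even norm, so it lies in $\mathfrak m=(u,v)$, which $uv$ annihilates. Commutativity of $\LL_2$ gives $zx=0$ as well. No norm computation is needed. This gives domination number $1$ and $\diam(G_{2^t})\le 2$ at once, with equality witnessed by the non-adjacent pair $1+i$, $1+j$. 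It is exactly your $p^{e-1}$ trick run at $p=2$ with the right element of $\LL_2$.

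Several other steps also need repair.

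\emph{Non-prime-power diameter.} Take $n=n_1n_2$ with $\gcd(n_1,n_2)=1$ and $n_1,n_2>1$. The literal pair $(u,0)$, $(0,u')$ is adjacent, because $(u,0)(0,u')=0$. Take instead $x=(1,z_2)$ and $y=(z_1,1)$ with $z_1,z_2$ nonzero nonunits. Every neighbour of $x$ has first coordinate $0$, every neighbour of $y$ has second coordinate $0$, and $xy=yx=(z_1,z_2)\neq 0$, so $d(x,y)\ge 3$. Your upper bound through elements with one zero coordinate is not justified: if $x_2,y_2$ are units, all neighbours of $x$ and $y$ lie in $\LL_{n_1}\oplus 0$. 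The standard argument works in any finite ring. A nonunit has nonzero left and right annihilators, so choose $a,b\neq 0$ with $xa=0=by$. Then $x-a-b-y$ if $ab=0$, or $x-ab-y$ if $ab\neq 0$, shows $d(x,y)\le 3$.

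\emph{Girth.} Nonzero multiples of a square-zero matrix give only two vertices when $p=3$, and ``check directly'' cannot cover infinitely many composite $n$. Use instead:
\begin{itemize}
\item the $K_{p+1}$ of Theorem~\ref{thm:cliqueodd} for odd $p$;
\item $G_2\cong F_3$;
\item the square-zero ideals $p^{\lceil e/2\rceil}\LL_{p^e}$ with Lemma~\ref{lem:squarezero} for $e\ge 2$;
\item for composite $n$, a triangle of a prime-power factor padded with zero coordinates.
\end{itemize}

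\emph{Domination lower bound for $G_p$.} ``Must cover all kernel and image lines'' is not an argument. What is true is this: if no member of $D$ has image $X$ and none has kernel $Y$, then $\calC_{X,Y}\subseteq D$. If $|D|\le p$, such a type exists, so at most one element of $D$ lies outside that class. That leaves at least $(p-1)^2$ such types and forces $|D|\ge (p-1)^3>p$, a contradiction.

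\emph{Diameter is not $1$.} Idempotents with different kernels can be adjacent ($E_{11}E_{22}=0$), and $\LL_{2^t}$ has no nontrivial idempotents. Use for instance $E_{11}$, $E_{11}+E_{12}$ in the odd case and $1+i$, $1+j$ in the two-adic case.
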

	
	\medskip
	For later use, we also record a matrix-ring characterization of adjacency at prime modulus.
	\begin{lemma}\label{lem:rankoneadj}
		Let $p$ be an odd prime and identify $\LL_p$ with $M_2(\F_p)$. If $A,B\in M_2(\F_p)$ are nonzero singular matrices, then
		$AB=0 \iff \Image(B)\subseteq \Ker(A),$ and $BA=0 \iff \Image(A)\subseteq \Ker(B).$ As $A$ and $B$ have rank $1$, this is equivalent to $AB=0 \iff \Image(B)=\Ker(A),$ and $BA=0 \iff \Image(A)=\Ker(B).$
	\end{lemma}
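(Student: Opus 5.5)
The plan is to reduce everything to linear algebra on $\F_p^2$, viewing each matrix as a linear map $\F_p^2\to\F_p^2$ acting on column vectors. The first step is the basic equivalence: for any $A,B\in M_2(\F_p)$, the product $AB$ is the zero map exactly when $A(Bv)=0$ for every $v\in\F_p^2$. This holds if and only if every vector of $\Image(B)=\{Bv : v\in\F_p^2\}$ lies in $\Ker(A)$, that is, $\Image(B)\subseteq\Ker(A)$. Exchanging the roles of $A$ and $B$ gives $BA=0\iff \Image(A)\subseteq\Ker(B)$ with no further work. Note that this step uses neither singularity nor nonzeroness.

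The second step pins down the dimensions. A nonzero matrix has $\rank\ge 1$, and a singular $2\times 2$ matrix over a field has $\rank\le 1$. So for nonzero singular $A$ and $B$ we get $\rank A=\rank B=1$. By rank--nullity on $\F_p^2$, $\dim\Ker(A)=2-1=1$, and likewise $\dim\Image(B)=1$. Hence $\Image(B)$ and $\Ker(A)$ are both one-dimensional subspaces of $\F_p^2$. An inclusion between subspaces of equal finite dimension is an equality, so $\Image(B)\subseteq\Ker(A)$ is equivalent to $\Image(B)=\Ker(A)$. The same argument applied to $\Image(A)$ and $\Ker(B)$ gives the second equivalence.

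Finally, we make sure the identification $\LL_p\cong M_2(\F_p)$ from Proposition~\ref{prop:finitefacts}(iii) causes no trouble. It is a ring isomorphism, so zero products, nonzeroness and the zero-divisor property all transfer. By Proposition~\ref{prop:finitefacts}(i), the nonzero zero divisors of the finite ring $M_2(\F_p)$ are exactly the nonzero nonunits, i.e.\ the nonzero singular matrices. So the lemma covers precisely the vertices of $G_p$.

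There is no real obstacle here. The only point needing care is the order of composition in the first step, since the convention of left action on column vectors decides whether it is $\Image(B)\subseteq\Ker(A)$ or $\Image(A)\subseteq\Ker(B)$ that corresponds to $AB=0$.
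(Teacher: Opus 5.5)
Your proof is correct and takes essentially the same route as the paper: the equivalence $AB=0 \iff \Image(B)\subseteq\Ker(A)$ holds for arbitrary linear maps, and rank--nullity makes every kernel and image of a nonzero singular $2\times 2$ matrix a line, so each inclusion is an equality. Your remark that the nonzero singular matrices are exactly the vertices of $G_p$ is not needed for the lemma itself, but it does no harm.
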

	
	\begin{proof}
		The implications $AB=0\iff \Image(B)\subseteq \Ker(A)$ and $BA=0\iff \Image(A)\subseteq \Ker(B)$ hold for arbitrary linear maps. For nonzero singular $2\times 2$ matrices over the field $\F_p$, rank-nullity gives $\dim\Ker(A)=\dim\Image(A)=1$, and similarly for $B$. Hence, the relevant inclusions are equivalent to equality of lines.
	\end{proof}
	
	\medskip
	Theorem~\ref{thm:known} supplies the graph-theoretic background. The next sections focus instead on adjacency-matrix structure. In the odd-prime case, the kernel--image geometry of rank-one matrices gives a natural compressed model, and the point of the analysis is to make the resulting block structure and its spectral consequences explicit.
	
	\section{Odd prime modulus: a block decomposition of the adjacency matrix}\label{sec:oddprime}
	
	Throughout this section, $p$ denotes an odd prime, and from Proposition~\ref{prop:finitefacts}, we use the identification $\LL_p\cong M_2(\F_p)$. Let $V=\F_p^2$, and let $\calL$ be the set of $1$-dimensional subspaces of $V$. Since the projective line has cardinality $|\PP^1(\F_p)|=p+1$, so we have $|\calL|=p+1$. For a nonzero singular matrix $A\in M_2(\F_p)$, both $\Ker(A)$ and $\Image(A)$ lie in $\calL$. This motivates the following classification.
	
	\begin{definition}
		For $L,M\in \calL$, let
		$$
		\calC_{L,M}:=\{A\in M_2(\F_p): A\neq 0,\ \det(A)=0,\ \Ker(A)=L,\ \Image(A)=M\}.
		$$
		We call $(L,M)$ the \emph{type} of $A$, and the family $\{\calC_{L,M}\}_{L,M\in\calL}$ the \emph{type partition}.
	\end{definition}
	
	\medskip
	The following lemma gives the order of $\calC_{L,M}.$
	\begin{lemma}\label{lem:classsize}
		For every $L,M\in\calL$, we have $|\calC_{L,M}|=p-1.$ Consequently, the type partition has $(p+1)^2$ parts, each of cardinality $p-1$, and $|V(G_p)|=(p+1)^2(p-1)=p^3+p^2-p-1.$ 
	\end{lemma}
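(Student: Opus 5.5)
The plan is to parametrize rank-one matrices with prescribed kernel and image explicitly. Fix $L,M\in\calL$ and choose nonzero vectors $u\in M$ and $w\in V$ with $\Ker(w^{T})=L$, where we view $w^{T}$ as the linear functional $x\mapsto w^{T}x$. Such a $w$ exists and is unique up to a nonzero scalar, since the functionals vanishing on the line $L$ form a one-dimensional subspace of the dual space $V^{*}$.

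First, every rank-one matrix $A\in M_2(\F_p)$ factors as $A=ab^{T}$ with $a,b$ nonzero; here $a$ spans $\Image(A)$ and $\Ker(A)=\Ker(b^{T})$. Next, if $A\in\calC_{L,M}$, then $a\in M$, so $a=\alpha u$ with $\alpha\neq0$. Likewise $\Ker(b^{T})=L$ forces $b=\beta w$ with $\beta\neq 0$. Hence $A=\lambda\,uw^{T}$ with $\lambda=\alpha\beta\in\F_p^{\times}$. Conversely, every matrix $\lambda uw^{T}$ with $\lambda\neq0$ is nonzero, singular, and has kernel $L$ and image $M$. The map $\lambda\mapsto\lambda uw^{T}$ is injective, because $uw^{T}\neq0$. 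This gives a bijection $\F_p^{\times}\to\calC_{L,M}$, and therefore $|\calC_{L,M}|=p-1$.

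For the consequences, note that every nonzero singular matrix has a well-defined kernel and image in $\calL$, by rank-nullity as in Lemma~\ref{lem:rankoneadj}. So the classes $\calC_{L,M}$ partition the set of nonzero singular matrices into $|\calL|^2=(p+1)^2$ parts. By Proposition~\ref{prop:finitefacts}(i), in the finite ring $M_2(\F_p)$ the nonzero zero divisors are exactly the nonzero nonunits, that is, the nonzero singular matrices. Thus $V(G_p)$ is the union of the type classes, and $|V(G_p)|=(p+1)^2(p-1)=p^3+p^2-p-1$. This agrees with Theorem~\ref{thm:known}(i).

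No step is a serious obstacle. The only point needing care is the claim that the kernel condition pins $b$ down up to a scalar. This follows because $b^{T}$ is a nonzero functional whose kernel is exactly the line $L$, and such functionals form a one-dimensional space.
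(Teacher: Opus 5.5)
Your proof is correct and is essentially the paper's argument written in coordinates. The paper factors $A$ as $V\to V/L\xrightarrow{\ \varphi\ }M\hookrightarrow V$ with $\varphi$ a nonzero map between lines, and your choice of $w^{T}$ and $u$ fixes identifications $V/L\cong\F_p\cong M$, so your scalar $\lambda$ plays the role of $\varphi$. You also spell out, via Proposition~\ref{prop:finitefacts}(i), why the vertex set is exactly the set of nonzero singular matrices, which the paper only calls immediate.
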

	
	\begin{proof}
		For $L,M\in \calL$, a nonzero linear transformation $A:V\to V$ with $\Ker(A)=L$ and $\Image(A)=M$ factors uniquely as
		$$
		V \longrightarrow V/L \xrightarrow{\ \varphi\ } M \hookrightarrow V,
		$$
		where $\varphi:V/L\to M$ is a nonzero linear map between $1$-dimensional $\F_p$-vector spaces. Such maps are in bijection with $\F_p^\times$, and hence there are exactly $p-1$ possibilities. Now, the vertex count follows immediately.
	\end{proof}
	
	\medskip
	We now give the first main theorem of the paper.	
	\begin{theorem}\label{thm:block}
		Let $\calT=\calL\times \calL$ be indexed by ordered pairs of lines, and let a $(p+1)^2\times (p+1)^2$ matrix $H_p=(h_{\alpha,\beta})_{\alpha,\beta\in\calT}$ be defined as
		$$
		h_{(L,M),(L',M')}=
		\begin{cases}
			1,& \text{if } M=L' \text{ or } M'=L,\\
			0,& \text{otherwise.}
		\end{cases}
		$$
		Also, let the diagonal matrix $D_p=(d_{\alpha,\beta})_{\alpha,\beta\in\calT}$ be 
		$$
		d_{(L,M),(L',M')}=
		\begin{cases}
			1,& \text{if } (L,M)=(L',M') \text{ and } L=M,\\
			0,& \text{otherwise.}
		\end{cases}
		$$
		Then the adjacency matrix $A_p=A(G_p)$ is permutation similar to
		$$
		A_p \sim H_p\otimes \J_{p-1}-D_p\otimes \I_{p-1}.
		$$
		Equivalently, after ordering the vertices by type classes $\calC_{L,M}$, the block indexed by $(L,M)$ and $(L',M')$ is
		$$
		B_{(L,M),(L',M')}=
		\begin{cases}
			\J_{p-1}-\I_{p-1},& \text{if } (L,M)=(L',M') \text{ and } L=M,\\[1mm]
			\J_{p-1},& \text{if } (L,M)\neq (L',M') \text{ and } (M=L' \text{ or } M'=L),\\[1mm]
			\O_{p-1},& \text{otherwise.}
		\end{cases}
		$$
	\end{theorem}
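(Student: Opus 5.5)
The plan is to order the vertex set of $G_p$ by type classes and compute each block directly. By Lemma~\ref{lem:classsize} the classes $\calC_{L,M}$, $(L,M)\in\calT$, partition $V(G_p)$ into $(p+1)^2$ parts of size $p-1$. Fix any enumeration of $\calT$, and within each class fix any enumeration of its $p-1$ elements. Listing the vertices class by class gives a permutation matrix $P$ such that $P^{\top}A_pP$ is a $(p+1)^2\times(p+1)^2$ array of $(p-1)\times(p-1)$ blocks. It then suffices to show that the block indexed by $(L,M),(L',M')$ is the one prescribed by the theorem. That block form is the same as the entry $h\,\J_{p-1}-d\,\I_{p-1}$ of $H_p\otimes\J_{p-1}-D_p\otimes\I_{p-1}$, provided we also check that $h_{(L,M),(L,M)}=1$ exactly when $L=M$.

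The key step is to use Lemma~\ref{lem:rankoneadj} to see that adjacency depends only on types. Take $A\in\calC_{L,M}$ and $B\in\calC_{L',M'}$ with $A\neq B$. Then $AB=0$ if and only if $\Image(B)=\Ker(A)$, that is, $M'=L$. Likewise $BA=0$ if and only if $\Image(A)=\Ker(B)$, that is, $M=L'$. So two distinct vertices are adjacent exactly when $M=L'$ or $M'=L$, which is exactly the condition $h_{(L,M),(L',M')}=1$. This condition involves only the types, so every off-diagonal block ($(L,M)\neq(L',M')$) is either all ones or all zeros. It is $\J_{p-1}$ precisely when $h=1$ and $\O_{p-1}$ otherwise, and $d=0$ there, which matches the claimed block.

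For a diagonal block $(L,M)=(L',M')$, the incidence condition reads $M=L$ in both alternatives. If $L=M$, any two distinct elements of the class are adjacent, and the zero diagonal of $A_p$ is kept because the graph is simple. This gives $\J_{p-1}-\I_{p-1}$, which agrees with $h=d=1$. As a side check, such $A$ satisfy $A^2=0$, consistent with nilpotent rank-one matrices. If $L\neq M$, no two elements of the class are adjacent, giving $\O_{p-1}$, which agrees with $h=d=0$. Finally, since the ordering is block-by-type, $P^{\top}A_pP$ equals $H_p\otimes\J_{p-1}-D_p\otimes\I_{p-1}$ under the standard Kronecker indexing.

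The only step needing care is the diagonal block. There one must remember to exclude self-adjacency, and one must check that $h$ on the diagonal coincides with $d$, so that subtracting $D_p\otimes\I_{p-1}$ exactly removes the spurious diagonal ones and leaves no negative entries. Everything else follows directly from Lemma~\ref{lem:rankoneadj}.
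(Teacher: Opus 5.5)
Your proposal is correct and follows essentially the same route as the paper: order the vertices class by class, use Lemma~\ref{lem:rankoneadj} to show that adjacency of distinct vertices depends only on whether $M=L'$ or $M'=L$, and then treat the diagonal blocks separately to obtain $\J_{p-1}-\I_{p-1}$ or $\O_{p-1}$. Your explicit check that $h_{\alpha,\alpha}=d_{\alpha,\alpha}$ for every type $\alpha$ makes the same point as the paper's remark that $D_p\otimes\I_{p-1}$ subtracts the identity only from the diagonal-type blocks.
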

	\begin{proof}
		Choose an ordering of the type set $\calT$ and, inside each class $\calC_{L,M}$, choose any ordering of its $p-1$ elements. This produces an ordering of all vertices, since the type classes are pairwise disjoint and, by Lemma~\ref{lem:classsize}, each has the same cardinality $p-1$. We determine the block joining two fixed types $\alpha=(L,M)$ and $\beta=(L',M')$.
		
		Let $A\in\calC_{L,M}$ and $C\in\calC_{L',M'}$. Since both matrices have rank $1$, Lemma~\ref{lem:rankoneadj} gives
		$$
		AC=0\iff \Image(C)=\Ker(A)\iff M'=L,
		$$
		and
		$$
		CA=0\iff \Image(A)=\Ker(C)\iff M=L'.
		$$
		Hence, whenever $\alpha\neq\beta$, either every pair in $\calC_{L,M}\times\calC_{L',M'}$ is adjacent or no pair is adjacent. More precisely, the off-diagonal block is $\J_{p-1}$ exactly when $M=L'$ or $M'=L$, and is $\O_{p-1}$ otherwise. Notice that the criterion is symmetric in $\alpha$ and $\beta$, as it must be for the adjacency matrix of the undirected graph.
		
		It remains to determine the block belonging to a single type $\alpha=(L,M)$. For distinct $A,C\in\calC_{L,M}$., the preceding criterion reduces to
		$$
		A\sim C\iff M=L.
		$$
		Thus, if $L\neq M$, no two distinct vertices of $\calC_{L,M}$ are adjacent, and the diagonal block is $\O_{p-1}$. If $L=M$, every two distinct vertices in the class are adjacent, but the graph has no loops; consequently the block is $\J_{p-1}-\I_{p-1}$ rather than $\J_{p-1}$.
		
		Now $H_p$ places a copy of $\J_{p-1}$ in every block for which the incidence condition $M=L'$ or $M'=L$ holds. In particular, its diagonal block is $\J_{p-1}$ precisely for a diagonal type $(L,L)$. The matrix $D_p\otimes\I_{p-1}$ subtracts the identity exactly from those diagonal-type blocks and nowhere else. Therefore the block matrix in the chosen vertex ordering is
		$$
		H_p\otimes\J_{p-1}-D_p\otimes\I_{p-1}.
		$$
		Changing from the original vertex ordering to this type ordering is effected by a permutation matrix. Hence the displayed block matrix is permutation similar to $A_p$, completing the proof.
	\end{proof}
	
	\medskip
	The block form immediately gives the row sums.	
	\begin{corollary}\label{cor:degrees}
		Let $A$ be a vertex of $G_p$. Then the following hold.
		\begin{enumerate}[label=\textup{(\roman*)},noitemsep]
			\item If $\Ker(A)=\Image(A)$, then $\deg(A)=2p(p-1)+(p-2)=2p^2-p-2.$ 
			\item If $\Ker(A)\neq \Image(A)$, then $\deg(A)=(2p+1)(p-1)=2p^2-p-1.$ 
		\end{enumerate}
		In particular, $G_p$ is biregular with the two degree values differing by exactly $1$.
	\end{corollary}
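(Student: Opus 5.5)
The plan is to read the degree of a vertex $A\in\calC_{L,M}$ off the row of the block matrix in Theorem~\ref{thm:block}. Every block in that row is either $\J_{p-1}$, $\J_{p-1}-\I_{p-1}$, or $\O_{p-1}$. So the row sum of $A$ equals $(p-1)$ times the number of types $\beta=(L',M')$ with $h_{\alpha,\beta}=1$, where $\alpha=(L,M)$, minus $1$ if $\alpha$ is a diagonal type. Equivalently, $\deg(A)=(p-1)\,r_\alpha-d_{\alpha,\alpha}$, where $r_\alpha$ is the $\alpha$-row sum of $H_p$. The proof therefore reduces to counting $r_\alpha$.

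To count $r_\alpha$, I would apply inclusion--exclusion to the two incidence conditions $M=L'$ and $M'=L$.
\begin{itemize}[noitemsep]
\item The types with $L'=M$ are the pairs $(M,M')$ with $M'\in\calL$ free, so there are $p+1$ of them.
\item The types with $M'=L$ are the pairs $(L',L)$ with $L'\in\calL$ free, again $p+1$ of them.
\item Both conditions hold only for the single type $(M,L)$.
\end{itemize}
Hence $r_\alpha=2(p+1)-1=2p+1$ for every $\alpha$. The one subtle point is whether $\alpha$ itself is among these types. We have $\alpha=(L,M)$ satisfying $L=M$ (from the first condition) or $M=L$ (from the second) exactly when $L=M$. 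This agrees with the diagonal block of $H_p$ being $\J_{p-1}$ precisely for diagonal types.

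Now I would split into the two cases.

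If $L\neq M$, the diagonal block is $\O_{p-1}$, and $\alpha$ is not counted in $r_\alpha$. The row therefore has $2p+1$ blocks equal to $\J_{p-1}$, and
\[
\deg(A)=(2p+1)(p-1)=2p^2-p-1.
\]

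If $L=M$, then $\alpha$ is counted in $r_\alpha$, and its block is $\J_{p-1}-\I_{p-1}$. The remaining $2p$ types contribute full $\J_{p-1}$ blocks, giving
\[
\deg(A)=2p(p-1)+(p-2)=2p^2-p-2.
\]
This is exactly the form of the statement.

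Both degree values occur, since diagonal types and non-diagonal types both exist for $p\ge 3$. They differ by $1$, which gives biregularity. The only step needing care is the overlap bookkeeping, that is, making sure the self-type $\alpha$ is handled correctly rather than double counted. I expect no real obstacle beyond that.
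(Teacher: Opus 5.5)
Your proposal is correct and follows essentially the paper's argument. Both read the degree off the type-class row of the block form in Theorem~\ref{thm:block}, count the $2(p+1)-1=2p+1$ incident types (the overlap being $(M,L)$), and then separate the case $L\neq M$ (own class absent, every incident block is $\J_{p-1}$) from the case $L=M$ (own class present with block $\J_{p-1}-\I_{p-1}$); your formula $\deg(A)=(p-1)r_\alpha-d_{\alpha,\alpha}$ is just a compact repackaging of the same count.
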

	
	\begin{proof}
		Let $A\in \calC_{L,M}$. Then by Theorem~\ref{thm:block}, the neighboring type classes are exactly those $\calC_{L',M'}$ satisfying $M=L'$ or $M'=L$. If $L\neq M$, the union of these classes consists of
		$$
		\{(M,X): X\in \calL\}\ \cup\ \{(X,L): X\in\calL\},
		$$
		which has $2(p+1)-1=2p+1$ distinct types, none of which is $(L,M)$. Each class contributes $p-1$ vertices, thereby giving
		$\deg(A)=(2p+1)(p-1).$ If $L=M$, then the same union has $2p+1$ types, and it  includes its own class $(L,L)$. The other $2p$ classes contribute $2p(p-1)$ neighbors, while the own class contributes $p-2$ neighbors. Thus, $\deg(A)=2p(p-1)+(p-2).$ 
	\end{proof}
	
	\medskip
	The following consequence gives the edge cardinality of $G_{p}.$
	\begin{corollary}\label{cor:edges}
		The number of edges of $G_p$ is $|E(G_p)|=\frac{(p^2-1)(2p^3+p^2-2p-2)}{2}.$ 
	\end{corollary}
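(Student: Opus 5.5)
The plan is to apply the handshake lemma to the degree formulas of Corollary~\ref{cor:degrees}: we have $2|E(G_p)|=\sum_{A}\deg(A)$, so it suffices to count the vertices of each of the two degree types and add.

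The first step is to count vertices with $\Ker(A)=\Image(A)$. These are exactly the vertices lying in the diagonal type classes $\calC_{L,L}$, $L\in\calL$. By Lemma~\ref{lem:classsize}, each class has $p-1$ elements and there are $|\calL|=p+1$ such classes, so there are $(p+1)(p-1)=p^2-1$ such vertices, each of degree $2p^2-p-2$. The remaining $(p+1)^2-(p+1)=p(p+1)$ types are non-diagonal. They contribute $p(p+1)(p-1)=p(p^2-1)$ vertices, each of degree $2p^2-p-1$.

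The second step is to sum. We get
$$
2|E(G_p)|=(p^2-1)(2p^2-p-2)+p(p^2-1)(2p^2-p-1).
$$
Factoring out $p^2-1$ leaves
$$
2p^2-p-2+2p^3-p^2-p=2p^3+p^2-2p-2,
$$
and dividing by $2$ gives the stated formula.

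As a sanity check, one could note that $(p^2-1)(2p^3+p^2-2p-2)$ is even, since $p^2-1$ is even for odd $p$. Alternatively, the count can be cross-checked via Theorem~\ref{thm:block}: the number of edges equals half of $\one^T\bigl(H_p\otimes\J_{p-1}-D_p\otimes\I_{p-1}\bigr)\one=(p-1)^2\,\one^TH_p\one-(p-1)(p+1)$.

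There is no real obstacle here. The only point requiring care is correctly splitting the types into diagonal and off-diagonal ones, and the degree bookkeeping has already been done in Corollary~\ref{cor:degrees}.
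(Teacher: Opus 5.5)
Your proposal is correct and follows essentially the same route as the paper: apply the handshake lemma to the two degree values from Corollary~\ref{cor:degrees}, with $p^2-1$ diagonal-type vertices and $p(p^2-1)$ off-diagonal-type vertices, then factor out $p^2-1$. Your optional cross-check via $(p-1)^2\,\one^{\top}H_p\one-(p^2-1)$ also works, since every row of $H_p$ has exactly $2p+1$ ones.
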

	
	\begin{proof}
		There are $(p+1)(p-1)=p^2-1$ vertices with $\Ker(A)=\Image(A)$, and $p(p+1)(p-1)$ vertices with $\Ker(A)\neq \Image(A)$. From Corollary~\ref{cor:degrees}, summing degrees and dividing by $2$, we have
		\begin{align*}
			2|E(G_p)|
			&=(p^2-1)(2p^2-p-2)+p(p+1)(p-1)(2p^2-p-1)\\
			&=(p^2-1)(2p^3+p^2-2p-2).
		\end{align*}
	\end{proof}
	
	\medskip
	The block model also determines two extremal invariants exactly. These formulas use more than the degree sequence: they depend on the directed incidence hidden in the ordered pair $(\Ker(A),\Image(A))$.
	
	\begin{theorem}\label{thm:cliqueodd}
		For every odd prime $p$,
		$$
		\omega(G_p)=p+1.
		$$
	\end{theorem}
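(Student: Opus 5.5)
The proof compares a lower bound from an explicit clique with a matching upper bound, both read off from the block structure of Theorem~\ref{thm:block}. Throughout I use the adjacency rule proved there: distinct vertices of types $(L,M)$ and $(L',M')$ are adjacent iff $M=L'$ or $M'=L$. It helps to view a type $(L,M)$ with $L\neq M$ as an arc $L\to M$ of the complete loopless digraph on $\calL$. Two such arcs are adjacent iff the head of one is the tail of the other.

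\emph{Lower bound.} Fix $L\in\calL$ and choose $X,Y\in\calL\setminus\{L\}$ (possible since $|\calL|=p+1\ge 4$). Take all $p-1$ vertices of $\calC_{L,L}$, which form a clique by Theorem~\ref{thm:block}. Add one vertex of type $(L,X)$ and one of type $(Y,L)$. Each of these two types is joined to $(L,L)$, because its tail or its head equals $L$. They are joined to each other because the head of $(Y,L)$ equals the tail of $(L,X)$. This gives a clique of size $p+1$.

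\emph{Upper bound.} Let $K$ be a clique. I would establish three facts.
\begin{enumerate}[label=\textup{(\alph*)},noitemsep]
\item A non-diagonal class $\calC_{L,M}$ with $L\neq M$ is independent, so it contributes at most one vertex to $K$.
\item Two distinct diagonal types $(L,L)$ and $(L',L')$ are never adjacent, since adjacency would force $L=L'$. Hence $K$ meets at most one diagonal class, contributing at most $p-1$ vertices.
\item A set of pairwise adjacent arcs has size at most $3$, and size $3$ occurs only for a directed triangle $a\to b\to c\to a$.
\end{enumerate}

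For (c), take two adjacent arcs and, after relabelling, write them as $(a,b)$ and $(b,c)$ with $a\neq b\neq c$. A third arc $(x,y)$ adjacent to both must satisfy ($y=a$ or $x=b$) and ($y=b$ or $x=c$). Since $y=b$ and $x=b$ cannot hold together, the only consistent option is $(x,y)=(c,a)$, which requires $c\neq a$. So three pairwise adjacent arcs form a directed triangle, and every further arc is forced to be this same $(c,a)$. Hence there is no fourth arc.

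Now split into two cases.
\begin{itemize}[noitemsep]
\item If $K$ contains no diagonal vertex, then $|K|\le 3\le p+1$.
\item If $K$ contains vertices of type $(L,L)$, then every non-diagonal type in $K$ must be adjacent to $(L,L)$, so it has $L$ as its tail or its head. A directed triangle cannot have all three arcs incident to $L$: if $L=a$, the arc $(b,c)$ misses $L$. So at most $2$ non-diagonal vertices occur, and $|K|\le (p-1)+2=p+1$.
\end{itemize}

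The main obstacle is the case analysis in (c) classifying pairwise adjacent families of arcs. I would check it carefully, in particular the degenerate possibilities $c=a$ and arcs sharing an endpoint, since this step also fixes the exact size of the non-diagonal part of a clique.
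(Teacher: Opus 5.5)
Your proof is correct and follows the paper's argument. The lower-bound clique is the same ($\calC_{L,L}$ plus one vertex of type $(L,X)$ and one of type $(Y,L)$), and so is the case split on whether $K$ meets a diagonal class. The only difference is cosmetic: in the diagonal case the paper bounds the off-diagonal part directly, noting that two distinct off-diagonal types $(L,M),(L,M')$ (or $(N,L),(N',L)$) are never adjacent, whereas you route both cases through your directed-triangle classification (c).
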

	
	\begin{proof}
		Fix $L\in\calL$. The class $\calC_{L,L}$ is a clique of order $p-1$. Choose lines $M,N\in\calL\setminus\{L\}$, one vertex $x\in\calC_{L,M}$, and one vertex $y\in\calC_{N,L}$. Every vertex of $\calC_{L,L}$ is adjacent to both $x$ and $y$, while $x$ and $y$ are adjacent because the image line of $y$ is $L$, the kernel line of $x$. Hence $G_p$ contains a clique of order $(p-1)+2=p+1$.
		
		For the reverse inequality, let $K$ be a clique. Suppose first that $K$ contains a vertex from some diagonal type $\calC_{L,L}$. Any other vertex of $K$ must then have type $(L,M)$ or $(N,L)$. Among the off-diagonal types $(L,M)$ with fixed first coordinate $L$, two distinct types are not adjacent; the same is true among the types $(N,L)$ with fixed second coordinate $L$. Moreover, each off-diagonal type class is independent. Thus $K$ contains at most one vertex of an outgoing type and at most one vertex of an incoming type, in addition to at most all $p-1$ vertices of $\calC_{L,L}$. Hence $|K|\le p+1$.
		
		Now suppose that $K$ contains no diagonal-type vertex. Pick a vertex of type $(L,M)$ with $L\neq M$. Every other type represented in $K$ must be of the form $(M,X)$ or $(Y,L)$. Since diagonal types are excluded, at most one type of the first form and at most one type of the second form can occur in a clique; within every such off-diagonal class at most one vertex can occur. Therefore $|K|\le3<p+1$. Combining the two cases gives $\omega(G_p)=p+1$.
	\end{proof}
	
	Next, we discuss the independence number of $G_{p}.$
	\begin{theorem}\label{thm:independenceodd}
		For every odd prime $p$,
		$$
		\alpha(G_p)=\frac{(p-1)(p+1)^2}{4}.
		$$
	\end{theorem}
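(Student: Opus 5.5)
The plan is to reduce the problem to the type partition of Theorem~\ref{thm:block}, then treat it as a question about arcs in the complete digraph on $\calL$ (loops allowed). Under this view a type $(L,M)$ is the arc $L\to M$. By Theorem~\ref{thm:block}, two distinct types are joined exactly when the head of one equals the tail of the other. Within a class, an off-diagonal class $\calC_{L,M}$ is independent, while a diagonal class $\calC_{L,L}$ is a clique.

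\textbf{Lower bound.} Since $p$ is odd, $p+1$ is even. Split $\calL=S\sqcup T$ with $|S|=|T|=(p+1)/2$, and take the union of all classes $\calC_{L,M}$ with $L\in S$ and $M\in T$.
\begin{itemize}
\item No such type is diagonal, so each class is internally independent.
\item For two such types, $M=L'$ would force $M\in S\cap T=\emptyset$, and similarly $M'=L$ is impossible. So no two classes are joined.
\end{itemize}
This gives an independent set of size $(p-1)\bigl((p+1)/2\bigr)^2$, which is the claimed value.

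\textbf{Upper bound.} Let $I$ be an independent set. Let $D\subseteq\calL$ be the set of lines $L$ with $I\cap\calC_{L,L}\neq\emptyset$; each such class meets $I$ in at most one vertex, since $\calC_{L,L}$ is a clique.
\begin{itemize}
\item A vertex of type $(L,L)$ is adjacent to every type with $L$ as a coordinate. Hence the remaining vertices of $I$ lie in off-diagonal types $(L,M)$ with $L,M\notin D$.
\item Let $W$ be the set of off-diagonal types used, let $S$ be the set of their tails and $T$ the set of their heads.
\item Independence forbids any head being equal to a tail, so $S\cap T=\emptyset$.
\item Then $|W|\le |S||T|$ and $|S|+|T|\le p+1-d$, where $d=|D|$.
\end{itemize}
Hence
$$|I|\le (p-1)\Bigl\lfloor \tfrac{(p+1-d)^2}{4}\Bigr\rfloor + d .$$

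The remaining step, which I expect to be the only real obstacle, is a small optimization showing this bound is maximized at $d=0$. Write $m=p+1-d$. Increasing $d$ by one lowers $\lfloor m^2/4\rfloor$ by $\lfloor m/2\rfloor$, so the bound drops by $(p-1)\lfloor m/2\rfloor-1$. This drop is nonnegative whenever $m\ge 2$, because $p\ge3$. The last step, from $m=1$ to $m=0$ (that is, $d=p+1$), can gain at most $1$. So I will simply compare endpoints directly: the value at $d=p+1$ is $p+1$, and at $d=p$ it is $p$. Both are less than $(p-1)(p+1)^2/4$ for $p\ge3$.

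Combining the two bounds gives $\alpha(G_p)=(p-1)(p+1)^2/4$.
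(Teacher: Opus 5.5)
Your proof is correct and follows the paper's argument. It uses the same balanced split $\calL=S\sqcup T$ for the lower bound. For the upper bound, it uses the same observation that the kernel lines and image lines of the represented off-diagonal types must be disjoint, which gives the product bound $(p-1)|S||T|$.

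Your treatment of diagonal-type vertices is more complete than the paper's. You collect all of them into $D$ and check that $d\mapsto d+(p-1)\lfloor (p+1-d)^2/4\rfloor$ is maximized at $d=0$. The paper instead removes one diagonal line and reapplies the diagonal-free argument to the remaining $p$ lines, without addressing further diagonal-type vertices among those lines.
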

	
	\begin{proof}
		Since $q=p+1$, where $p$ is odd, $q$ is even. So choose disjoint subsets $X,Y\subseteq\calL$ with $|X|=|Y|=q/2$, and let
		$$
		S=\bigcup_{L\in X,\,M\in Y}\calC_{L,M}.
		$$
		Because $X\cap Y=\varnothing$, every type in this union is off-diagonal. If $(L,M)$ and $(L',M')$ are two represented types, then $M\in Y$ and $L'\in X$, so $M\neq L'$; likewise $M'\neq L$. Theorem~\ref{thm:block} shows that no two vertices of $S$ are adjacent. Hence
		$$
		\alpha(G_p)\ge |S|=(p-1)|X||Y|=\frac{(p-1)(p+1)^2}{4}.
		$$
		
		Let $S'$ be an arbitrary independent set. First assume that it contains no vertex of diagonal type. Let $X'$ be the set of kernel lines and $Y'$ the set of image lines occurring among the represented types. We claim that $X'\cap Y'=\varnothing$. Indeed, if $L\in X'\cap Y'$, then $S'$ contains vertices from types $(L,M)$ and $(N,L)$. Since diagonal types are absent, these types are distinct, and Theorem~\ref{thm:block} makes the two vertices adjacent, a contradiction. Thus every represented type lies in $X'\times Y'$ with $X'\cap Y'=\varnothing$. Each such type contributes at most $p-1$ vertices, and therefore
		$$
		|S'|\le(p-1)|X'||Y'|\le(p-1)\frac{q^2}{4}.
		$$
		
		It remains to consider an independent set containing a diagonal-type vertex, say from $\calC_{L,L}$. It contains no second vertex from that class, and every other represented type avoids $L$ in both coordinates. Applying the preceding argument on the remaining $p$ lines gives
		$$
		|S'|\le 1+(p-1)\frac{p^2-1}{4}.
		$$
		For $p\ge3$ this is strictly smaller than $(p-1)(p+1)^2/4$, since the difference equals $(p^2-3)/2$. The lower construction is therefore optimal, proving the formula.
	\end{proof}
	
	\medskip
	Theorem~\ref{thm:block} converts the adjacency problem from multiplication in $M_2(\F_p)$ to an incidence problem on $\PP^1(\F_p)$. This reduction constitutes precisely the type of structural information that remains obscured by the scalar invariants in \cite{grau2017}. Specifically, it demonstrates that the occurrence of repeated rows in $A_p$ is not coincidental as they systematically arise inside each type class $\calC_{L,M}$. 
	
	\begin{example}\label{ex:p3types}
		Let $\calL=\{L_1,L_2,L_3,L_4\},$ $\calC_{rs}:=\calC_{L_r,L_s}$ for $1\le r,s\le 4$, and order the $16$ type classes lexicographically as
		$$
		\calC_{11},\calC_{12},\calC_{13},\calC_{14},
		\calC_{21},\calC_{22},\calC_{23},\calC_{24},
		\calC_{31},\calC_{32},\calC_{33},\calC_{34},
		\calC_{41},\calC_{42},\calC_{43},\calC_{44}.
		$$
		For $p=3$, each class has size $p-1=2$, so $|\calL|=4$, $G_3$ has $4^2=16$ type classes, each of size $2$, and hence
		$|V(G_3)|=32,$ see block diagram in Figure \ref{fig:G3-joined-union-manual}.
		By Corollary~\ref{cor:degrees}, the $8$ diagonal-type vertices have degree $13$, while the $24$ off-diagonal-type vertices have degree $14$. Corollary~\ref{cor:edges} gives $|E(G_3)|=220.$ 
		Thus, the matrix $A_3$ is a $32\times 32$ symmetric $(0,1)$ matrix built as a $16\times16$ block matrix with $2\times2$ blocks
		$$
		\renewcommand{\arraystretch}{1.05}
		\setlength{\arraycolsep}{2.8pt}
		A_3=
		\left(
		\begin{array}{cccc|cccc|cccc|cccc}
			K&J&J&J&J&O&O&O&J&O&O&O&J&O&O&O\\
			J&O&O&O&J&J&J&J&J&O&O&O&J&O&O&O\\
			J&O&O&O&J&O&O&O&J&J&J&J&J&O&O&O\\
			J&O&O&O&J&O&O&O&J&O&O&O&J&J&J&J\\
			\hline
			J&J&J&J&O&J&O&O&O&J&O&O&O&J&O&O\\
			O&J&O&O&J&K&J&J&O&J&O&O&O&J&O&O\\
			O&J&O&O&O&J&O&O&J&J&J&J&O&J&O&O\\
			O&J&O&O&O&J&O&O&O&J&O&O&J&J&J&J\\
			\hline
			J&J&J&J&O&O&J&O&O&O&J&O&O&O&J&O\\
			O&O&J&O&J&J&J&J&O&O&J&O&O&O&J&O\\
			O&O&J&O&O&O&J&O&J&J&K&J&O&O&J&O\\
			O&O&J&O&O&O&J&O&O&O&J&O&J&J&J&J\\
			\hline
			J&J&J&J&O&O&O&J&O&O&O&J&O&O&O&J\\
			O&O&O&J&J&J&J&J&O&O&O&J&O&O&O&J\\
			O&O&O&J&O&O&O&J&J&J&J&J&O&O&O&J\\
			O&O&O&J&O&O&O&J&O&O&O&J&J&J&J&K
		\end{array}
		\right),
		$$
		where each displayed entry is itself a $2\times2$ block. Equivalently, $A_3\sim H_3\otimes \J_2-D_3\otimes \I_2,$ where the $16\times16$ matrix $H_3$ is obtained from the above display by replacing every $K$ with $J$. From $H_3\otimes \J_2$, the four diagonal-type classes $\calC_{11},\calC_{22},\calC_{33},\calC_{44}$ constitute the block $K$.
	\end{example}

	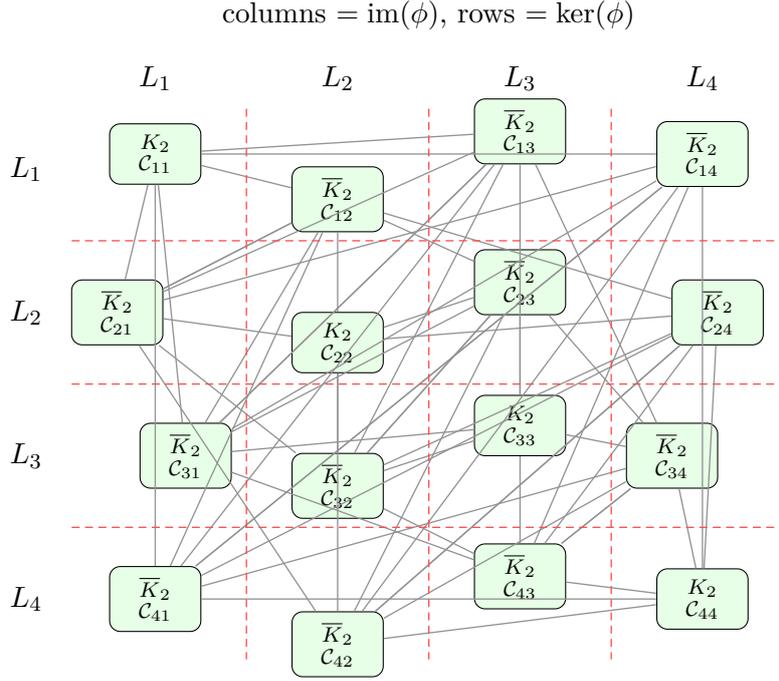
\begin{figure}[H]
		\centering
		\begin{tikzpicture}[
			scale=1,
			super/.style={draw, rounded corners, minimum width=12mm, minimum height=8mm,
				fill=green!10, font=\scriptsize, align=center},
			superedge/.style={gray!85, line width=0.5pt}
			]
			
			\node[super] (c11) at (0.0,  0.2) {$K_2$\\[-0.5mm]$\mathcal{C}_{11}$};
			\node[super] (c12) at (2.4,  -0.4) {$\overline{K}_2$\\[-0.5mm]$\mathcal{C}_{12}$};
			\node[super] (c13) at (4.8,  0.5) {$\overline{K}_2$\\[-0.5mm]$\mathcal{C}_{13}$};
			\node[super] (c14) at (7.2,  0.2) {$\overline{K}_2$\\[-0.5mm]$\mathcal{C}_{14}$};
			
			\node[super] (c21) at (-.5, -1.9) {$\overline{K}_2$\\[-0.5mm]$\mathcal{C}_{21}$};
			\node[super] (c22) at (2.4, -2.3) {$K_2$\\[-0.5mm]$\mathcal{C}_{22}$};
			\node[super] (c23) at (4.8, -1.5) {$\overline{K}_2$\\[-0.5mm]$\mathcal{C}_{23}$};
			\node[super] (c24) at (7.4, -1.9) {$\overline{K}_2$\\[-0.5mm]$\mathcal{C}_{24}$};
			
			\node[super] (c31) at (0.4, -3.8) {$\overline{K}_2$\\[-0.5mm]$\mathcal{C}_{31}$};
			\node[super] (c32) at (2.4, -4.2) {$\overline{K}_2$\\[-0.5mm]$\mathcal{C}_{32}$};
			\node[super] (c33) at (4.8, -3.4) {$K_2$\\[-0.5mm]$\mathcal{C}_{33}$};
			\node[super] (c34) at (6.8, -3.8) {$\overline{K}_2$\\[-0.5mm]$\mathcal{C}_{34}$};
			
			\node[super] (c41) at (0.0, -5.7) {$\overline{K}_2$\\[-0.5mm]$\mathcal{C}_{41}$};
			\node[super] (c42) at (2.4, -6.3) {$\overline{K}_2$\\[-0.5mm]$\mathcal{C}_{42}$};
			\node[super] (c43) at (4.8, -5.4) {$\overline{K}_2$\\[-0.5mm]$\mathcal{C}_{43}$};
			\node[super] (c44) at (7.2, -5.7) {$K_2$\\[-0.5mm]$\mathcal{C}_{44}$};
			
			\foreach \r in {1,...,4}{
				\foreach \s in {1,...,4}{
					\foreach \u in {1,...,4}{
						\foreach \v in {1,...,4}{
							\pgfmathtruncatemacro{\idxA}{4*\r+\s}
							\pgfmathtruncatemacro{\idxB}{4*\u+\v}
							\ifnum\idxA<\idxB
							\ifnum\s=\u \draw[superedge] (c\r\s) -- (c\u\v); \fi
							\ifnum\v=\r \draw[superedge] (c\r\s) -- (c\u\v); \fi
							\fi
						}
					}
				}
			}
			
			\begin{scope}[densely dashed, red!70, line width=0.5pt]
				\draw (-1.1, -0.95) -- (8.3, -0.95);
				\draw (-1.1, -2.85) -- (8.3, -2.85);
				\draw (-1.1, -4.75) -- (8.3, -4.75);
				\draw (1.2, 0.8) -- (1.2, -6.5);
				\draw (3.6, 0.8) -- (3.6, -6.5);
				\draw (6.0, 0.8) -- (6.0, -6.5);
			\end{scope}
			
			\foreach \i in {1,...,4}{
				\node[font=\small] at (-1.7, -1.9*\i+1.9) {$L_{\i}$};
				\node[font=\small] at (2.4*\i-2.4, 1.2) {$L_{\i}$};
			}
			\node[font=\small, anchor=south] at (3.6, 1.7) {columns $= \text{im}(\phi)$, rows $= \ker(\phi)$};
			
		\end{tikzpicture}
		\caption{A block representation of $G_3$.}
		\label{fig:G3-joined-union-manual}
	\end{figure}
	Figure \ref{fig:G3-joined-union-manual} presents an exact clustered drawing of the graph $G_3$, where each type class $\calC_{rs}=\calC_{L_r,L_s}$ consists of two vertices. If $r=s$, the class induces $K_2$, and if $r\neq s$, it induces $\overline{K}_2$. Two distinct classes $\calC_{rs}$ and $\calC_{uv}$ are joined by all four edges of a $K_{2,2}$
	exactly when $s=u$ or $v=r$.
	\section{Spectral consequences for odd prime modulus}\label{sec:spectral}
	
	We now exploit the block decomposition from Theorem~\ref{thm:block} to obtain new spectral information. In this sense,  the following theorem shows that all forced eigenvalues arising from row repetition can be isolated explicitly, and that the remaining spectrum lives on a much smaller reduced matrix of order $(p+1)^2$.
	
	\begin{theorem}\label{thm:charfactor}
		Let $p$ be an odd prime, and let $A_p=A(G_p)$. For each type class $\calC_{L,M}$, let
		$\one_{L,M}\in \mathbb{R}^{V(G_p)}$ be its indicator vector. Let $U=\operatorname{span}\{\one_{L,M}:\ (L,M)\in \calT\},$ be the class-constant subspace, and for each class $\calC_{L,M}$ let
		$$
		W_{L,M}=\left\{x\in \mathbb{R}^{V(G_p)}:\ \operatorname{supp}(x)\subseteq \calC_{L,M},\ \sum_{v\in \calC_{L,M}}x_v=0\right\}.
		$$
		Then the following hold.
		\begin{enumerate}[label=\textup{(\roman*)},noitemsep]
			\item $\mathbb{R}^{V(G_p)}=U\oplus\bigoplus_{(L,M)\in\calT}W_{L,M}$ is an $A_p$-invariant direct sum decomposition.
			\item On $U$, the matrix of $A_p$ relative to the basis $\{\one_{L,M}\}_{(L,M)\in\calT}$ is $B_p:=(p-1)H_p-D_p.$ 
			\item If $L\neq M$, then $A_p|_{W_{L,M}}=0$.
			\item If $L=M$, then $A_p|_{W_{L,L}}=-I$.
		\end{enumerate}
		Consequently, the characteristic polynomial of $A_p$ is
		$$
		\chi_{A_p}(\lambda)
		=
		\lambda^{\,p(p+1)(p-2)}(\lambda+1)^{(p+1)(p-2)}\chi_{B_p}(\lambda).
		$$
	\end{theorem}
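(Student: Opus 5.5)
The plan is to work entirely in the permuted form supplied by Theorem~\ref{thm:block}, $A_p\sim H_p\otimes\J_{p-1}-D_p\otimes\I_{p-1}$. We identify $\mathbb{R}^{V(G_p)}$ with $\mathbb{R}^{\calT}\otimes\mathbb{R}^{p-1}$, so that $\one_{L,M}=e_{(L,M)}\otimes\one_{p-1}$ and $W_{L,M}=e_{(L,M)}\otimes\one_{p-1}^{\perp}$. This uses Lemma~\ref{lem:classsize}, by which every class has exactly $p-1$ elements.

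For (i), we first note that $\mathbb{R}^{p-1}=\operatorname{span}(\one_{p-1})\oplus\one_{p-1}^{\perp}$. Tensoring with $\mathbb{R}^{\calT}$ gives $\mathbb{R}^{V(G_p)}=(\mathbb{R}^{\calT}\otimes\one_{p-1})\oplus\bigoplus_{\alpha}(e_\alpha\otimes\one_{p-1}^{\perp})$. This is exactly $U\oplus\bigoplus W_{L,M}$, and the decomposition is even orthogonal. A dimension count confirms it: $(p+1)^2+(p+1)^2(p-2)=(p+1)^2(p-1)$.

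Invariance and items (ii)--(iv) then come from the mixed-product rule. For $x\in\mathbb{R}^{\calT}$ we use $\J_{p-1}\one_{p-1}=(p-1)\one_{p-1}$ to get
\[
(H_p\otimes\J_{p-1}-D_p\otimes\I_{p-1})(x\otimes\one_{p-1})=\bigl(((p-1)H_p-D_p)x\bigr)\otimes\one_{p-1}.
\]
This shows that $U$ is invariant and that $A_p$ acts on $U$ by $B_p$ in the basis $\{\one_{L,M}\}$, which is (ii). For $y\perp\one_{p-1}$ we have $\J_{p-1}y=0$, so $A_p(e_\alpha\otimes y)=-(D_pe_\alpha)\otimes y$. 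The vector $D_pe_\alpha$ equals $e_\alpha$ if $\alpha$ is a diagonal type and $0$ otherwise. This gives (iii) and (iv), and also the invariance of each $W_{L,M}$.

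The characteristic polynomial now factors over the invariant decomposition as a product of the characteristic polynomials of the restrictions. There are $(p+1)^2-(p+1)=p(p+1)$ off-diagonal types, each contributing $\lambda^{p-2}$. There are $p+1$ diagonal types, each contributing $(\lambda+1)^{p-2}$. The block $U$ contributes $\chi_{B_p}(\lambda)$. Together these give the stated formula. Permutation similarity preserves characteristic polynomials, and the decomposition transfers back to the original vertex labelling under the permutation.

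No step here is a real obstacle. The only care needed is bookkeeping: checking that the permutation identifying vertices with $\calT\times\{1,\dots,p-1\}$ maps indicator vectors and the $W_{L,M}$ as claimed, and counting the multiplicity exponents correctly.
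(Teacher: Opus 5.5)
Your proposal is correct and follows essentially the paper's argument: the paper works blockwise, using $\J_{p-1}x=0$ for zero-sum $x$ and the fact that class-constant vectors are sent to class-constant vectors. Your mixed-product computation on $\mathbb{R}^{\calT}\otimes\mathbb{R}^{p-1}$ is the same argument in Kronecker notation, and your exponent count $p(p+1)(p-2)$ and $(p+1)(p-2)$ matches the paper's.
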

	\begin{proof}
		For each type $(L,M)$, the coordinate space on $\calC_{L,M}$ decomposes as
		$$
		\mathbb R^{\calC_{L,M}}=\mathbb R\one_{L,M}\oplus W_{L,M},
		\qquad \dim W_{L,M}=p-2.
		$$
		Summing over all $(p+1)^2$ types gives the direct sum in part \textup{(i)} and $\dim U=(p+1)^2$.
		
		Let $x\in W_{L,M}$. Every off-diagonal block acting on $x$ is either $\O_{p-1}$ or $\J_{p-1}$, and $\J_{p-1}x=0$ because the coordinates of $x$ sum to zero. Hence $W_{L,M}$ is invariant. Its diagonal block is $\O_{p-1}$ when $L\neq M$, giving eigenvalue $0$, and $\J_{p-1}-\I_{p-1}$ when $L=M$, giving eigenvalue $-1$. This proves \textup{(iii)} and \textup{(iv)}.
		
		The block structure also maps class-constant vectors to class-constant vectors, so $U$ is invariant. On the basis $\{\one_{L,M}\}$, an off-diagonal incident type contributes $(p-1)$, while a diagonal type $(L,L)$ contributes $p-2=(p-1)-1$. Therefore the matrix of $A_p|_U$ is
		$$
		B_p=(p-1)H_p-D_p,
		$$
		which proves \textup{(ii)}.
		
		There are $p(p+1)$ off-diagonal types and $p+1$ diagonal types. Each corresponding $W_{L,M}$ has dimension $p-2$. Multiplying the characteristic polynomials of the invariant summands yields
		$$
		\chi_{A_p}(\lambda)
		=\lambda^{p(p+1)(p-2)}(\lambda+1)^{(p+1)(p-2)}\chi_{B_p}(\lambda).
		$$
	\end{proof}
	
	\medskip
	Theorem~\ref{thm:charfactor} has the following immediate corollaries.
	
	\begin{corollary}\label{cor:nullity}
		For every odd prime $p$, $\nullity(A_p)\ge p(p+1)(p-2),$ and $m_{A_p}(-1)\ge (p+1)(p-2),$ where $m_{A_p}(-1)$ denotes the multiplicity of the eigenvalue $-1$. 	Hence $\rank(A_p)\le (p+1)(2p-1).$
	\end{corollary}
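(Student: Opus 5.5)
This corollary follows from the factorization of $\chi_{A_p}$ in Theorem~\ref{thm:charfactor}; only multiplicity counting and rank--nullity are needed. The plan is to read off the powers of $\lambda$ and $\lambda+1$ in that factorization and then deduce the rank bound.

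\textbf{Nullity.} By Theorem~\ref{thm:charfactor}(i) and (iii), the subspace $\bigoplus_{L\neq M}W_{L,M}$ is contained in $\Ker(A_p)$. Its dimension is $p(p+1)(p-2)$, since there are $p(p+1)$ off-diagonal types and each $W_{L,M}$ has dimension $p-2$. Equivalently, $\chi_{A_p}(\lambda)$ is divisible by $\lambda^{p(p+1)(p-2)}$. Because $A_p$ is real symmetric, algebraic and geometric multiplicities agree. Hence $\nullity(A_p)\ge p(p+1)(p-2)$, with possible extra contributions from zero eigenvalues of $B_p$.

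\textbf{Multiplicity of $-1$.} By part (iv), $A_p$ acts as $-I$ on $\bigoplus_{L}W_{L,L}$. This subspace has dimension $(p+1)(p-2)$, since there are $p+1$ diagonal types. So $m_{A_p}(-1)\ge (p+1)(p-2)$.

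\textbf{Rank.} By rank--nullity,
$$
\rank(A_p)=|V(G_p)|-\nullity(A_p)\le (p+1)^2(p-1)-p(p+1)(p-2).
$$
Factoring out $p+1$ leaves $(p^2-1)-(p^2-2p)=2p-1$. Hence $\rank(A_p)\le (p+1)(2p-1)$.

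The argument has no genuine obstacle. The only point needing care is to note that symmetry of $A_p$ turns the exponents in $\chi_{A_p}$ into dimensions of eigenspaces. The inequalities are not claimed to be equalities, since $B_p$ may itself have eigenvalues $0$ or $-1$.
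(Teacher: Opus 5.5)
Your proposal is correct and follows essentially the same route as the paper: you read off the forced $0$- and $-1$-eigenspaces $\bigoplus_{L\neq M}W_{L,M}$ and $\bigoplus_{L}W_{L,L}$ from Theorem~\ref{thm:charfactor}, count their dimensions, and apply rank--nullity with $|V(G_p)|=(p+1)^2(p-1)$. Your dimension counts and the simplification to $(p+1)(2p-1)$ are accurate, and your caveat that the bounds need not be equalities is appropriate at this stage (Corollary~\ref{cor:exactnullityrank} later shows that equality does hold).
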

	
	\begin{proof}
		The first two inequalities are immediate from Theorem~\ref{thm:charfactor}, since $|V(G_p)|=(p+1)^2(p-1)$, and we have
		$$
		\rank(A_p)=|V(G_p)|-\nullity(A_p)
		\le (p+1)^2(p-1)-p(p+1)(p-2)
		=(p+1)(2p-1).
		$$
	\end{proof}
	
	\medskip
	The preceding factorization can in fact be completed. The extra step is to use the simultaneous permutation symmetry of the two projective-line coordinates in the reduced type space.
	
	\begin{theorem}\label{thm:fullreducedspectrum}
		Let $p$ be an odd prime and Let
		$
		\Delta_p=4p^4-4p^2-8p+9,~
		\Theta_p=p^4-8p+8.
		$
		Define
		$$
		\lambda_{\pm}=p^2-p-\frac12\pm\frac12\sqrt{\Delta_p},
		\quad
		\text{and}
		\quad
		\mu_{\pm}=\frac{p(p-2)\pm\sqrt{\Theta_p}}{2}.
		$$
		Then the spectrum of the reduced matrix $B_p=(p-1)H_p-D_p$ is
		$$
		\left\{
		\lambda_+,\lambda_-,
		\mu_+^{[p]},\mu_-^{[p]},
		\bigl(-p(p-1)\bigr)^{[p]},
		(p-1)^{[p(p-1)/2]},
		\bigl(-(p-1)\bigr)^{[(p+1)(p-2)/2]}
		\right\},
		$$
		where the superscript $[m]$ denotes multiplicity $m$. Equivalently,
		$$
		\begin{aligned}
			\chi_{B_p}(\lambda)
			={}&(\lambda-\lambda_+)(\lambda-\lambda_-)
			(\lambda-\mu_+)^p(\lambda-\mu_-)^p
			(\lambda+p(p-1))^p\\
			&\times(\lambda-(p-1))^{p(p-1)/2}
			(\lambda+(p-1))^{(p+1)(p-2)/2}.
		\end{aligned}
		$$
		Together with Theorem~\ref{thm:charfactor}, this gives the complete adjacency spectrum of $G_p$.
	\end{theorem}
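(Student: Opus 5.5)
The plan is to model the type space as $\mathbb R^{\calL}\otimes\mathbb R^{\calL}$, i.e.\ as $q\times q$ real matrices $X=(x_{L,M})$ with $q=p+1$. Then I will split $H_p$ into three simple operators and decompose the space under the simultaneous action of $\mathrm{Sym}(\calL)$ on both coordinates, together with the swap $\tau:(L,M)\mapsto(M,L)$.

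\textbf{Step 1: algebraic form of $H_p$.} The condition ``$M=L'$ or $M'=L$'' is a union of two events. Their intersection $\{M=L',\,M'=L\}$ is exactly the swap permutation $T$. So $H_p=P+P^{\mathsf T}-T$, where $(PX)_{L,M}=\sum_{M'}x_{M,M'}=r_M(X)$ is a row sum and $(P^{\mathsf T}X)_{L,M}=\sum_{L'}x_{L',L}=c_L(X)$ is a column sum. In matrix language, $TX=X^{\mathsf T}$, and $D_p$ is the projection onto the diagonal. Hence
$$B_pX=(p-1)\bigl(\one\, r(X)^{\mathsf T}+c(X)\one^{\mathsf T}-X^{\mathsf T}\bigr)-\operatorname{diag}(X).$$
All of these operators commute with simultaneous relabelling of lines and with $\tau$.

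\textbf{Step 2: the large easy pieces.} Let $W_0$ be the space of matrices with all row and column sums zero; it has dimension $(q-1)^2=p^2$. On the part of $W_0$ with zero diagonal, $B_p$ acts as $-(p-1)\,T$. On the antisymmetric matrices in $W_0$, whose diagonal is automatically zero, this gives eigenvalue $p-1$ with multiplicity $\binom q2-(q-1)=p(p-1)/2$. On the symmetric, zero-diagonal matrices in $W_0$ it gives $-(p-1)$ with multiplicity $\binom{q+1}2-2q=(p+1)(p-2)/2$. In both cases I will check invariance directly: the row and column sums of $X^{\mathsf T}$ still vanish, and $\operatorname{diag}$ kills $X$.

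\textbf{Step 3: the remaining $3p+1$ dimensions.} The invariant complement is spanned by two families.
\begin{itemize}[noitemsep]
\item The trivial isotypic part, spanned by $\J_q$ and $\I_q$. On it $B_p$ acts by an explicit $2\times2$ matrix, whose eigenvalues should be $\lambda_\pm$; I will check this against $\tr=2p^2-2p-1$ and the determinant.
\item Three copies of the standard module $V=\{u:\sum u=0\}$, each of dimension $p$: $u\mapsto \one u^{\mathsf T}$, $u\mapsto u\one^{\mathsf T}$, and $u\mapsto\operatorname{diag}(u)$.
\end{itemize}
By Schur's lemma, $B_p$ acts on $V\otimes\mathbb R^3$ as $\I_V\otimes C$ for a $3\times3$ matrix $C$. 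I compute $C$ from the formula in Step 1. For example, $\operatorname{diag}(u)$ has row sums $u$ and column sums $u$, so
$$B_p\operatorname{diag}(u)=(p-1)\bigl(\one u^{\mathsf T}+u\one^{\mathsf T}-\operatorname{diag}(u)\bigr)-\operatorname{diag}(u).$$
The characteristic polynomial of $C$ should factor as $(\lambda+p(p-1))(\lambda^2-p(p-2)\lambda+c)$ with discriminant $\Theta_p$. Each of its roots then has multiplicity $p$. A dimension check, $2+3p+p(p-1)/2+(p+1)(p-2)/2=(p+1)^2$, confirms that the list of eigenvalues in the statement is complete.

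\textbf{Main obstacle.} The delicate part is Step 3. I must verify that the span of these $3p+1$ vectors really is the orthogonal complement of the two pieces in Step 2, so that no part of $W_0$ with nonzero diagonal is left over. Symmetric matrices in $W_0$ whose diagonal is nonzero have diagonal components lying in $\operatorname{diag}(V)\oplus$ multiples of $\I_q$, so this needs care. I must also carry out the $2\times2$ and $3\times3$ computations accurately enough to recover $\Delta_p$ and $\Theta_p$ exactly.
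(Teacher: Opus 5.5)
Your proposal is correct and is essentially the paper's own proof. It uses the same formula $(B_pX)_{ij}=(p-1)(R_j+C_i-x_{ji})-\delta_{ij}x_{ii}$ and the same eigenspaces, namely the antisymmetric and the symmetric zero-diagonal parts of $W_0$. It also has the same two-dimensional trivial block (your $\{\I_q,\J_q\}$ spans the paper's diagonal/off-diagonal space carrying $Q_p$) and the same invariant spaces $\operatorname{span}\{\one u^{\top},u\one^{\top},\operatorname{diag}(u)\}$ with the identical $3\times 3$ matrix, and it is finished by the same dimension count. Your ``main obstacle'' is settled as in the paper: all these pieces are mutually Frobenius-orthogonal, so the complement of the two $W_0$ eigenspaces is exactly the span of the trivial and standard pieces, which has dimension $3p+2$ (not $3p+1$, as you wrote). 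The part of $W_0$ with nonzero diagonal, spanned by $\I_q-\frac1q\J_q$ and $\operatorname{diag}(u)-\frac1q(\one u^{\top}+u\one^{\top})$, already lies inside those pieces.
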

	
	\begin{proof}
		Write $q=p+1$ and $r=p-1$. Label the lines of $\calL$ by $1,\ldots,q$ and identify $\mathbb R^{\calT}$ with the space of real $q\times q$ matrices $X=(x_{ij})$. If
		$$
		R_j=\sum_{\ell=1}^q x_{j\ell},
		\qquad
		C_i=\sum_{k=1}^q x_{ki},
		$$
		then the definition of $H_p$ gives
		$$
		(H_pX)_{ij}=R_j+C_i-x_{ji}.
		$$
		The subtraction of $x_{ji}$ removes the unique type $(j,i)$ counted in both incidence sums. Since $D_p$ keeps only diagonal coordinates,
		$$
		(B_pX)_{ij}=r(R_j+C_i-x_{ji})-\delta_{ij}x_{ii}.
		ag{4.1}
		$$
		
		Let $\one\in\mathbb R^q$ be the all-ones vector. We first consider
		$$
		\mathscr A=\{X:X^{\top}=-X,\ X\one=0\}.
		$$
		For $X\in\mathscr A$, both row and column sums vanish and $x_{ji}=-x_{ij}$, so (4.1) gives $B_pX=rX$. The dimension is
		$$
		\dim\mathscr A=\frac{(q-1)(q-2)}{2}=\frac{p(p-1)}{2}.
		$$
		Next let
		$$
		\mathscr S=\{X:X^{\top}=X,\ \operatorname{diag}(X)=0,\ X\one=0\}.
		$$
		Then (4.1) gives $B_pX=-rX$. The space of symmetric zero-diagonal matrices has dimension $q(q-1)/2$, and the row-sum map has rank $q$ for $q\ge3$; hence
		$$
		\dim\mathscr S=\frac{q(q-3)}{2}=\frac{(p+1)(p-2)}{2}.
		$$
		
		The two-dimensional space of matrices having one constant value on the diagonal and another off the diagonal is invariant. Relative to these two values, (4.1) acts as
		$$
		Q_p=
		\begin{pmatrix}
			p-2&2p(p-1)\\
			2(p-1)&(2p-1)(p-1)
		\end{pmatrix},
		$$
		whose eigenvalues are $\lambda_+$ and $\lambda_-$.
		
		It remains to account for the standard-coordinate part. Choose $u\in\one^{\perp}$ and consider
		$$
		\mathscr V(u)=\operatorname{span}\{u\one^{\top},\ \one u^{\top},\ \operatorname{diag}(u)\}.
		$$
		Equation (4.1) shows that this space is invariant. Relative to the displayed ordered basis, the representing matrix is
		$$
		M_p=
		\begin{pmatrix}
			0&p(p-1)&p-1\\
			p(p-1)&0&p-1\\
			-1&-1&-p
		\end{pmatrix}.
		$$
		The vector $(1,-1,0)^{\top}$ is an eigenvector with eigenvalue $-p(p-1)$. On the invariant plane where the first two coordinates agree, $M_p$ reduces to
		$$
		\begin{pmatrix}
			p(p-1)&p-1\\
			-2&-p
		\end{pmatrix},
		$$
		whose characteristic polynomial is
		$$
		\lambda^2-p(p-2)\lambda-(p-1)(p^2-2).
		$$
		Its two roots are exactly $\mu_+$ and $\mu_-$. Taking an orthogonal basis $u_1,\ldots,u_p$ of $\one^{\perp}$ therefore supplies each of these three eigenvalues with multiplicity $p$.
		
		The spaces just described are mutually orthogonal for the Frobenius inner product after the choice of an orthogonal basis of $\one^{\perp}$. Their dimensions add to
		$$
		2+\frac{p(p-1)}{2}+\frac{(p+1)(p-2)}{2}+3p=(p+1)^2.
		$$
		Thus they exhaust the reduced type space. Collecting the eigenvalues and their multiplicities gives the stated spectrum and characteristic polynomial.
	\end{proof}
	
	\medskip
	The following consequence gives nullity, multiplicity of $-1$ and the rank of $A_{p}.$
	\begin{corollary}\label{cor:exactnullityrank}
		For every odd prime $p$,
		$$
		\nullity(A_p)=p(p+1)(p-2),~
		m_{A_p}(-1)=(p+1)(p-2),
		$$
		and
		$
		\rank(A_p)=(p+1)(2p-1).
		$
	\end{corollary}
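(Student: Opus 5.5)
The plan is to combine the factorization of Theorem~\ref{thm:charfactor} with the explicit spectrum of $B_p$ from Theorem~\ref{thm:fullreducedspectrum}. By Theorem~\ref{thm:charfactor}, $\chi_{A_p}(\lambda)=\lambda^{p(p+1)(p-2)}(\lambda+1)^{(p+1)(p-2)}\chi_{B_p}(\lambda)$. Therefore
\[
\nullity(A_p)=p(p+1)(p-2)+m_{B_p}(0)
\quad\text{and}\quad
m_{A_p}(-1)=(p+1)(p-2)+m_{B_p}(-1).
\]
The whole statement thus reduces to showing that neither $0$ nor $-1$ is an eigenvalue of $B_p$. The rank formula then follows exactly as in Corollary~\ref{cor:nullity}, since $(p+1)^2(p-1)-p(p+1)(p-2)=(p+1)(2p-1)$.

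Next I would go through the seven eigenvalues of $B_p$ listed in Theorem~\ref{thm:fullreducedspectrum}. The integer values $-p(p-1)$, $p-1$ and $-(p-1)$ have absolute value at least $2$ for $p\ge3$, so none of them equals $0$ or $-1$.

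For $\lambda_\pm$, these are the roots of $\chi_{Q_p}(\lambda)=\lambda^2-(2p^2-2p-1)\lambda+\det Q_p$. I would evaluate
\[
\chi_{Q_p}(0)=\det Q_p=(p-2)(2p-1)(p-1)-4p(p-1)^2=-(p-1)(2p^2-p+2),
\]
which is nonzero. I would also evaluate $\chi_{Q_p}(-1)=1+(2p^2-2p-1)+\det Q_p$ and check that it is a nonzero polynomial for $p\ge3$. Expanding gives $2p^2-2p-(p-1)(2p^2-p+2)=-2p^3+5p^2-5p+2$, which is negative for $p\ge3$.

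For $\mu_\pm$, I would use their characteristic polynomial $\lambda^2-p(p-2)\lambda-(p-1)(p^2-2)$. At $\lambda=0$ it equals $-(p-1)(p^2-2)\neq0$. At $\lambda=-1$ it equals $1+p(p-2)-(p-1)(p^2-2)=-p^3+2p^2-p+1$, which is nonzero for $p\ge3$; for instance it equals $-11$ at $p=3$ and decreases thereafter.

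The only real obstacle is this arithmetic, namely making sure the two quadratics do not vanish at $0$ or $-1$. Beyond that, the result is immediate once the multiplicities $m_{B_p}(0)=m_{B_p}(-1)=0$ are established.
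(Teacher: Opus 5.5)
Your route is the paper's: use the factorization of Theorem~\ref{thm:charfactor}, check from the explicit spectrum in Theorem~\ref{thm:fullreducedspectrum} that neither $0$ nor $-1$ is an eigenvalue of $B_p$, and get the rank by subtraction. The logic is sound, but two of your evaluations are miscomputed, so the proof as written is not correct until they are fixed.

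First, the determinant is
$\det Q_p=(p-1)\bigl[(p-2)(2p-1)-4p(p-1)\bigr]=-(p-1)(2p^2+p-2)$, not $-(p-1)(2p^2-p+2)$.
At $p=3$, $\det Q_3=10-48=-38$, whereas your formula gives $-34$. The correct value is also the one consistent with $\lambda_+\lambda_-=\bigl(p^2-p-\tfrac12\bigr)^2-\tfrac14\Delta_p$. Consequently
$\chi_{Q_p}(-1)=2p^2-2p+\det Q_p=-(p-1)(2p^2-p-2)=-2p^3+3p^2+p-2$, not $-2p^3+5p^2-5p+2$.

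Second,
$1+p(p-2)-(p-1)(p^2-2)=-p^3+2p^2-1=-(p-1)(p^2-p-1)$, not $-p^3+2p^2-p+1$.
At $p=3$ this is $-10$, not $-11$; directly, $g_3(\lambda)=\lambda^2-3\lambda-14$ gives $g_3(-1)=-10$.

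All corrected values are still nonzero for $p\ge3$. The factored forms, which are what the paper records, make this immediate. So $m_{B_p}(0)=m_{B_p}(-1)=0$, and your nullity, multiplicity and rank conclusions hold once these computations are repaired.
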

	
	\begin{proof}
		By Theorem~\ref{thm:fullreducedspectrum}, none of the eigenvalues $\pm(p-1)$ or $-p(p-1)$ is $0$ or $-1$ when $p\ge3$. The product $\lambda_+\lambda_-=-(p-1)(2p^2+p-2)$ and the product $\mu_+\mu_-=-(p-1)(p^2-2)$ are nonzero, so $0$ is not an eigenvalue of $B_p$. Moreover, if
		$$
		f_p(\lambda)=\lambda^2-(2p^2-2p-1)\lambda-(p-1)(2p^2+p-2),
		$$
		and
		$$
		g_p(\lambda)=\lambda^2-p(p-2)\lambda-(p-1)(p^2-2),
		$$
		then
		$$
		f_p(-1)=-(p-1)(2p^2-p-2)\neq0,
		$$
		and
		$$
		g_p(-1)=-(p-1)(p^2-p-1)\neq0.
		$$
		Hence $B_p$ contributes neither additional zero eigenvalues nor additional copies of $-1$. The first two equalities now follow from Theorem~\ref{thm:charfactor}. Finally,
		$$
		\rank(A_p)=|V(G_p)|-\nullity(A_p)
		=(p+1)^2(p-1)-p(p+1)(p-2)
		=(p+1)(2p-1).
		$$
	\end{proof}
	
	\medskip
	We next identify a small quotient matrix that controls the spectral radius. Let $\calD=\bigcup_{L\in\calL}\calC_{L,L},$ and $	\calO=\bigcup_{\substack{L,M\in\calL\\L\neq M}}\calC_{L,M}.$ Thus $\calD$ is the set of diagonal-type vertices, and $\calO$ is the set of off-diagonal-type vertices.
	
	\begin{proposition}\label{prop:equitable}
		The partition $V(G_p)=\calD\sqcup \calO$ is equitable, and its quotient matrix is
		$$
		Q_p=
		\begin{pmatrix}
			p-2 & 2p(p-1)\\[1mm]
			2(p-1) & (2p-1)(p-1)
		\end{pmatrix}.
		$$
	\end{proposition}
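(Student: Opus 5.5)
The plan is to compute, for a vertex in $\calD$ and for a vertex in $\calO$, the number of its neighbours lying in $\calD$ and in $\calO$, using the block description of Theorem~\ref{thm:block}. The point to verify is that these counts depend only on whether the vertex is of diagonal or off-diagonal type, not on the specific type $(L,M)$. As a consistency check, the row sums of $Q_p$ should reproduce the two degrees of Corollary~\ref{cor:degrees}.

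For a vertex $A\in\calC_{L,L}\subseteq\calD$, Theorem~\ref{thm:block} says the neighbouring types are exactly the $(L',M')$ with $L'=L$ or $M'=L$. Among diagonal types, only $(L,L)$ satisfies this condition, and inside its own class $A$ has $p-2$ neighbours. This gives the entry $p-2$. The off-diagonal neighbouring types are $(L,X)$ with $X\neq L$ and $(Y,L)$ with $Y\neq L$. These are $2p$ distinct types, each of size $p-1$, which gives $2p(p-1)$. The row sum is $2p^2-p-2$, matching Corollary~\ref{cor:degrees}(i).

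For a vertex $A\in\calC_{L,M}$ with $L\neq M$, the neighbouring types are $(M,X)$ and $(Y,L)$ for $X,Y\in\calL$; the type $(M,L)$ appears in both families. The diagonal types among these are exactly $(M,M)$ and $(L,L)$. They are distinct because $L\neq M$, and they contribute $2(p-1)$ neighbours. The remaining neighbouring types number $(2p+1)-2=2p-1$. Each is off-diagonal and different from $(L,M)$ itself, since $(L,M)$ belongs to neither family when $L\neq M$. Each such class is fully joined to $A$ and has $p-1$ vertices, contributing $(2p-1)(p-1)$. The row sum is $(2p+1)(p-1)$, agreeing with Corollary~\ref{cor:degrees}(ii).

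The only real obstacle is the bookkeeping of the overlap type $(M,L)$ and of the vertex's own class. In the diagonal case the own class contributes $p-2$ rather than $p-1$, because the diagonal block is $\J_{p-1}-\I_{p-1}$. In the off-diagonal case the own class contributes nothing, since its block is $\O_{p-1}$ and $(L,M)$ is not itself a neighbouring type. Once these two points are handled, the counts are constant on $\calD$ and on $\calO$, so the partition is equitable with quotient matrix $Q_p$ as stated.
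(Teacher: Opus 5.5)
Your proposal is correct and follows essentially the same approach as the paper: both read off the neighbouring type classes from Theorem~\ref{thm:block} and count class sizes. The only difference is cosmetic. The paper gets the $\calO$-column of $Q_p$ by subtracting the $\calD$-count from the degrees in Corollary~\ref{cor:degrees}, whereas you count the off-diagonal neighbouring types directly and use those degrees only as a check.
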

	
	\begin{proof}
		For $A\in \calD$, say $A\in\calC_{L,L}$, and from Corollary~\ref{cor:degrees}, the number of neighbors in $\calD$ is $p-2$, all inside the same class $\calC_{L,L}$. So, the remaining neighbors lie in $\calO$, and their number is $(2p^2-p-2)-(p-2)=2p(p-1).$ Thus, every vertex in $\calD$ has row $(p-2,\,2p(p-1))$ in the quotient. Now take $A\in \calO$, say $A\in\calC_{L,M}$ with $L\neq M$. The only diagonal-type neighboring classes are $\calC_{L,L}$ and $\calC_{M,M}$, each of size $p-1$, so $A$ has $2(p-1)$ neighbors in $\calD$. Since $\deg(A)=(2p+1)(p-1)$, the number of neighbors in $\calO$ is $(2p+1)(p-1)-2(p-1)=(2p-1)(p-1).$ 	Therefore, the partition is equitable with the stated quotient matrix.
	\end{proof}
	
	\medskip
	The following theorem presents a closed formula for the spectral radius of $A_p$. 	
	\begin{theorem}\label{thm:specradius}
		Let $p$ be an odd prime. Then
		$$
		\rho(A_p)
		=
		p^2-p-\frac12+\frac12\sqrt{4p^4-4p^2-8p+9}.
		$$
	\end{theorem}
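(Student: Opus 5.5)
The plan is to read off the spectral radius from the equitable partition $\calD\sqcup\calO$ of Proposition~\ref{prop:equitable}. By standard equitable-partition theory (see \cite{godsil2001,brouwer2012}), every eigenvalue of the quotient matrix $Q_p$ is an eigenvalue of $A_p$, obtained by lifting a quotient eigenvector to a vector that is constant on $\calD$ and on $\calO$. Moreover, the largest eigenvalue of $Q_p$ equals $\rho(A_p)$. The reason is that $Q_p$ is a positive $2\times 2$ matrix, so its Perron eigenvector $(a,b)^{\top}$ has $a,b>0$. The lifted vector is then a positive eigenvector of the nonnegative symmetric matrix $A_p$. For a nonnegative matrix, any eigenvector with strictly positive entries belongs to the spectral radius; one can see this by pairing it against the Perron vector of $A_p$, or by a Rayleigh-quotient and Collatz--Wielandt argument. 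This conclusion does not need connectivity, although $G_p$ is in fact connected since $\diam(G_p)=2$ by Theorem~\ref{thm:known}.

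Next compute the eigenvalues of $Q_p$. Its trace is
$$(p-2)+(2p-1)(p-1)=2p^2-2p-1,$$
and its determinant is
$$(p-2)(2p-1)(p-1)-4p(p-1)^2=-(p-1)(2p^2+p-2).$$
So the characteristic polynomial is exactly $f_p(\lambda)$ from the proof of Corollary~\ref{cor:exactnullityrank}. Its discriminant is
$$(2p^2-2p-1)^2+4(p-1)(2p^2+p-2).$$
Expanding gives
$$4p^4-8p^3+1+4p+8p^3-4p^2-8p+8=4p^4-4p^2-8p+9=\Delta_p.$$
Hence the larger root is
$$\lambda_+=p^2-p-\tfrac12+\tfrac12\sqrt{\Delta_p},$$
which is the claimed value.

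As a cross-check, one could instead compare $\lambda_+$ against the full list in Theorem~\ref{thm:fullreducedspectrum} together with the eigenvalues $0$ and $-1$ from Theorem~\ref{thm:charfactor}. We have $\sqrt{\Delta_p}\ge 2p^2-2$ because $\Delta_p-(2p^2-2)^2=4p^2-8p+5>0$, so $\lambda_+\ge 2p^2-p-\tfrac32$. This exceeds $|\lambda_-|$, $|\mu_\pm|\le p^2$, $p(p-1)$ and $p-1$ for all $p\ge 3$. The Perron argument is cleaner, however, and it is the one I will write.

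The only step needing real care is justifying that the largest quotient eigenvalue is the spectral radius rather than just some eigenvalue of $A_p$. The positivity argument above handles this, so I expect no serious obstacle.
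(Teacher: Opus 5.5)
Your proposal is correct and follows essentially the same route as the paper. Both lift the positive Perron eigenvector of the quotient $Q_p$ of the equitable partition $\calD\sqcup\calO$ to a positive eigenvector of $A_p$, conclude that its eigenvalue is $\rho(A_p)$, and take the larger root of $\lambda^2-(2p^2-2p-1)\lambda-(p-1)(2p^2+p-2)$.

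There is one small slip in your discriminant expansion: $4(p-1)(2p^2+p-2)=8p^3-4p^2-12p+8$, not $8p^3-4p^2-8p+8$, since your displayed terms would sum to $4p^4-4p^2-4p+9$. The final value $\Delta_p=4p^4-4p^2-8p+9$ that you state is nonetheless correct.
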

	
	\begin{proof}
		By Proposition~\ref{prop:equitable}, the partition $V(G_p)=\calD\sqcup\calO$ is equitable with quotient matrix
		$$
		Q_p=
		\begin{pmatrix}
			p-2 & 2p(p-1)\\[1mm]
			2(p-1) & (2p-1)(p-1)
		\end{pmatrix}.
		$$
		Its characteristic polynomial is
		$$
		\lambda^2-(2p^2-2p-1)\lambda-(p-1)(2p^2+p-2),
		$$
		and therefore its larger eigenvalue is
		$$
		\lambda_+=p^2-p-\frac12+\frac12\sqrt{4p^4-4p^2-8p+9}.
		$$
		The matrix $Q_p$ is nonnegative and irreducible, so Perron--Frobenius gives a positive eigenvector $(x,y)^{\top}$ for $\lambda_+$. Lift it to a vector $z\in\mathbb R^{V(G_p)}$ by assigning the value $x$ to every vertex of $\calD$ and the value $y$ to every vertex of $\calO$. Equitability gives $A_pz=\lambda_+z$. Since $z$ is strictly positive, the Perron--Frobenius theorem for the nonnegative matrix $A_p$ implies that the corresponding eigenvalue is the spectral radius. Hence $\rho(A_p)=\lambda_+$.
	\end{proof}

	\medskip
	The following example illustrates these facts.
	\begin{example}\label{ex:p3spectral}
		For $p=3$, the two cells of the equitable partition are $\calD=\bigcup_{L\in\calL}\calC_{L,L},$ and $\calO=\bigcup_{\substack{L,M\in\calL\\L\neq M}}\calC_{L,M}.$  Also $|\calD|=(p+1)(p-1)=4\cdot 2=8,$ and $|\calO|=p(p+1)(p-1)=3\cdot 4\cdot 2=24.$ By Proposition~\ref{prop:equitable}, the quotient matrix of the partition
		$V(G_3)=\calD\sqcup \calO$ is
		$$
		Q_3=
		\begin{pmatrix}
			p-2 & 2p(p-1)\\[1mm]
			2(p-1) & (2p-1)(p-1)
		\end{pmatrix}
		=
		\begin{pmatrix}
			1&12\\
			4&10
		\end{pmatrix}.
		$$
		Its characteristic polynomial  is $\chi_{Q_3}(\lambda) 	=	\lambda^2-11\lambda-38.$  By Theorem~\ref{thm:specradius}, the spectral radius is  $\rho(A_3)=\tfrac{11+\sqrt{273}}{2}\approx 13.7614.$ 		
		Next, Corollary~\ref{cor:nullity} gives  $\nullity(A_3)\ge 3\cdot 4\cdot 1=12,$ and $m_{A_3}(-1)\ge 4\cdot 1=4.$ These bounds are sharp by Corollary~\ref{cor:exactnullityrank}: $\nullity(A_3)=12$, $m_{A_3}(-1)=4$, and $\rank(A_3)=20$. Thus the block-repetition subspaces account for the entire kernel and the entire $-1$-eigenspace in this case.
	\end{example}
	
	\section{The two-adic family and the exact base case $n=2$}\label{sec:twopower}
	
	The analysis of odd primes was centered on the matrix-ring model $M_2(\F_p)$. In the two-adic context, no general model exists \cite{grau2017}, requiring an alternative approach to the adjacency matrix.
	
	\medskip
	The following theorem determines the exact base graph at $n=2$.	
	\begin{theorem}\label{thm:n2}
		In $\LL_2$, let $u=1+i$ and $v=1+j$. Then $\LL_2\cong \F_2[u,v]/(u^2,v^2)$, the zero-divisor graph satisfies $G_2\cong F_3$ (Figure \ref{fig:G2}), and the adjacency matrix $A_2$ has characteristic polynomial
		$$
		\chi_{A_2}(\lambda)=(\lambda-3)(\lambda+2)(\lambda-1)^2(\lambda+1)^3.
		$$
	\end{theorem}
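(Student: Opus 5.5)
The plan is to prove the three assertions of Theorem~\ref{thm:n2} in order: the ring presentation, the identification of the graph, and the spectrum. Each step is a short direct computation in characteristic $2$.

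\emph{Ring presentation.} Over $\F_2$ we have $-1=1$, so $ji=-k=k=ij$; the same holds for the other pairs, hence $\LL_2$ is commutative. We compute
$$u^2=(1+i)^2=1+2i+i^2=0,\qquad v^2=0,\qquad uv=1+i+j+k.$$
The four elements $1,u,v,uv$ are $\F_2$-linearly independent: written in the basis $1,i,j,k$ they form a unitriangular matrix. So they form a basis of the $16$-element ring. The surjection $\F_2[U,V]\to\LL_2$, $U\mapsto u$, $V\mapsto v$, kills $U^2$ and $V^2$. Both sides of the induced map $\F_2[U,V]/(U^2,V^2)\to\LL_2$ have dimension $4$, so it is an isomorphism. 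The ring is local with maximal ideal $\mathfrak m=(u,v)=\{au+bv+c\,uv\}$. By Proposition~\ref{prop:finitefacts}(i), the vertices are the $7$ nonzero elements of $\mathfrak m$. This agrees with $2^{4\cdot1-1}-1=7$ from Theorem~\ref{thm:known}.

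\emph{Identifying $G_2\cong F_3$.} For $x=au+bv+c\,uv$ and $y=a'u+b'v+c'uv$ in $\mathfrak m$, using $u^2=v^2=0$ and $\mathfrak m^3=0$ we get
$$xy=(ab'+a'b)\,uv.$$
Hence $uv$ annihilates all of $\mathfrak m$ and is adjacent to the other six vertices. For the six vertices with $(a,b)\ne(0,0)$, the condition $xy=0$ says that $(a,b)$ and $(a',b')$ are linearly dependent over $\F_2$. For nonzero vectors in $\F_2^2$ this means they are equal. So distinct such vertices are adjacent exactly when they share $(a,b)$ and differ only in $c$. This yields the three edges
$$\{u,\,u+uv\},\qquad \{v,\,v+uv\},\qquad \{u+v,\,u+v+uv\},$$
each forming a triangle with $uv$. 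The graph is therefore $K_1\vee 3K_2=F_3$.

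\emph{Spectrum.} I would mimic the invariant-subspace argument of Theorem~\ref{thm:charfactor}.
\begin{itemize}[noitemsep]
\item Vectors supported on one edge $\{x,x'\}$ with values $1,-1$ are eigenvectors with eigenvalue $-1$. The three edges give multiplicity $3$.
\item Vectors vanishing at $uv$, constant on each edge, with edge constants summing to $0$, are eigenvectors with eigenvalue $1$. This space has dimension $2$.
\item The remaining two-dimensional space comes from the equitable partition $\{uv\}\sqcup(\text{rest})$. Its quotient matrix is $\begin{pmatrix}0&6\\1&1\end{pmatrix}$, with characteristic polynomial $\lambda^2-\lambda-6=(\lambda-3)(\lambda+2)$.
\end{itemize}
These subspaces have dimensions $3+2+2=7$ and are independent, so they give the stated factorization of $\chi_{A_2}$.

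The only genuinely delicate point is the presentation step. There one must check commutativity in characteristic $2$ and confirm that $1,u,v,uv$ is a basis, so that the product formula $xy=(ab'+a'b)uv$ is justified. After that, everything is routine.
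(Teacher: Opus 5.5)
Your proof is correct and follows the paper's argument essentially step for step: the unitriangular change of basis to $1,u,v,uv$, the product formula $xy=(ab'+a'b)\,uv$ on $\mathfrak m$ (so $uv$ is adjacent to every other vertex and the remaining six vertices pair up by their $(u,v)$-coefficients), and the same three eigenspaces (pair differences for $-1$, pair-constant vectors with zero total for $1$, and the center/outer quotient $\begin{pmatrix}0&6\\1&1\end{pmatrix}$ for $3$ and $-2$). The one point you assert rather than prove is that $\LL_2$ is local with maximal ideal $\mathfrak m$; the paper gives a one-line reason, and it also follows from your own formula, since every element outside $\mathfrak m$ has the form $1+n$ with $n\in\mathfrak m$, and $n^2=0$ gives $(1+n)^2=1$, so it is a unit.
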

	
	\begin{proof}
		Work in characteristic $2$. The quaternion relations become $i^2=j^2=k^2=1$ and $ij=k=ji$, because the sign change in $ji=-k$ disappears modulo $2$. Put $u=1+i$ and $v=1+j$. Then
		$
		u^2=(1+i)^2=1+i^2=0,~
		v^2=(1+j)^2=1+j^2=0,
		$
		and
		$
		uv=(1+i)(1+j)=1+i+j+k.
		$
		The four elements $1,u,v,uv$ are linearly independent over $\F_2$ because the change of coordinates from the basis $1,i,j,k$ is invertible. Hence every element of $\LL_2$ has a unique expression $a+bu+cv+duv$, with $a,b,c,d\in\F_2$. The relations $u^2=v^2=0$ therefore give an isomorphism
		$$
		\LL_2\cong\F_2[u,v]/(u^2,v^2).
		$$
		
		Let $\mathfrak m=(u,v)$. The quotient $\LL_2/\mathfrak m$ is $\F_2$, so $\mathfrak m$ is maximal. If $a=1$ in an element $a+bu+cv+duv$, then its nilpotent part lies in $\mathfrak m$ and $1+(bu+cv+duv)$ is a unit; thus $\mathfrak m$ is the unique maximal ideal. Consequently the nonzero zero divisors are exactly
		$$
		uv,\ u,\ u+uv,\ v,\ v+uv,\ u+v,\ u+v+uv.
		$$
		
		For $x=bu+cv+duv$ and $y=b'u+c'v+d'uv$ in $\mathfrak m$, the defining relations give
		$$
		xy=(bc'+cb')uv.
		ag{5.1}
		$$
		Thus $uv$ annihilates all of $\mathfrak m$. Among the six remaining vertices, equation (5.1) shows that two distinct elements are adjacent exactly when their $(u,v)$-coefficient pairs are equal. The three nonzero pairs $(1,0)$, $(0,1)$, and $(1,1)$ therefore produce precisely the three edges
		$$
		u\sim u+uv,
		\qquad
		v\sim v+uv,
		\qquad
		u+v\sim u+v+uv.
		$$
		Together with the six edges incident to $uv$, this is the friendship graph $F_3$. In the ordering stated in the theorem, its adjacency matrix is exactly the displayed matrix $A_2$.
		
		Finally, $F_3$ has an equitable partition consisting of its central vertex and the six outer vertices, with quotient matrix
		$$
		\begin{pmatrix}0&6\\1&1\end{pmatrix},
		$$
		whose eigenvalues are $3$ and $-2$. On the outer vertices, differences inside each of the three matched pairs give the eigenvalue $-1$ with multiplicity $3$, while the two-dimensional subspace of vectors constant on each pair and having total pair-sum zero gives the eigenvalue $1$ with multiplicity $2$. Hence
		$$
		\chi_{A_2}(\lambda)=(\lambda-3)(\lambda+2)(\lambda-1)^2(\lambda+1)^3.
		$$
	\end{proof}
	
	\begin{figure}[H]
		\centering
		\begin{tikzpicture}[scale=1.1, every node/.style={circle,draw,inner sep=1.8pt,fill=white}]
			\node (c) at (0,0) {$uv$};
			\node (a1) at (2,0.9) {$u$};
			\node (a2) at (2,-0.9) {$u+uv$};
			\node (b1) at (-2,0.9) {$v$};
			\node (b2) at (-2,-0.9) {$v+uv$};
			\node (d1) at (2,2.2) {$u+v$};
			\node (d2) at (0,-2.2) {$u+v+uv$};
			
			\draw (c)--(a1)--(a2)--(c);
			\draw (c)--(b1)--(b2)--(c);
			\draw (c)--(d1)--(d2)--(c);
		\end{tikzpicture}
		\caption{The graph $G_2$.}
		\label{fig:G2}
	\end{figure}
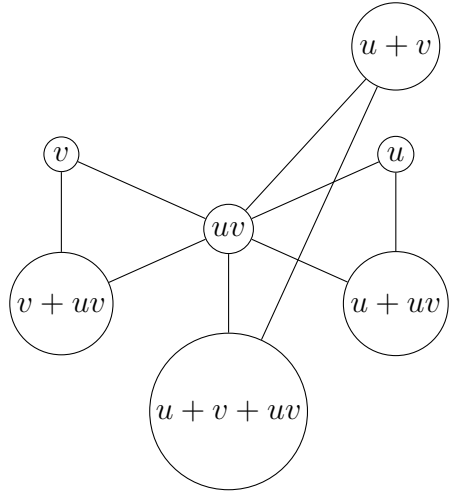
	
	\medskip
	The preceding exact calculation suggests using ideals to locate large principal blocks in the higher two-adic graphs. We first isolate the general mechanism.
	
	\begin{lemma}\label{lem:squarezero}
		Let $R$ be a finite ring and let $I$ be a nonzero ideal satisfying $I^2=0$. Then $I\setminus\{0\}$ induces a complete subgraph of the undirected zero-divisor graph of $R$.
	\end{lemma}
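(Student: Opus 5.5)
The plan is to verify the two requirements for a complete subgraph directly: every element of $I\setminus\{0\}$ must be a vertex of the zero-divisor graph, and any two distinct such elements must be adjacent. Both follow from the single identity $xy=0$ for all $x,y\in I$, which is the meaning of $I^2=0$. The ideal $I^2$ is generated by the products $xy$ with $x,y\in I$, so $I^2=0$ forces each such product to vanish.

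First, fix a nonzero $x\in I$. I would show that $x$ is a zero divisor in the sense fixed in Section~\ref{sec:preliminaries}. Since $x\in I$, we have $x\cdot x\in I^2=0$, so $x^2=0$ with $x\neq 0$. Taking $y=x$ in the definition exhibits $x$ as a zero divisor. Alternatively, $x$ is a nonunit: if $x$ were a unit then $I=R$ and $I^2=R\neq 0$. One could then invoke Proposition~\ref{prop:finitefacts}(i), but the direct argument avoids it. Hence $I\setminus\{0\}\subseteq V(\Gamma(R))$.

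Next, let $x\neq y$ be two elements of $I\setminus\{0\}$. Then $xy\in I^2=0$, and by symmetry $yx=0$ as well. This is more than the adjacency condition ``$xy=0$ or $yx=0$'' requires, so $x\sim y$. Consequently the induced subgraph on $I\setminus\{0\}$ is complete, of order $|I|-1$.

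There is no real obstacle. The only point needing care is that the vertex set is defined by the one-sided condition: a nonzero $x$ with $xy=0$ or $yx=0$ for some nonzero $y$. The witness $y=x$ covers this without any commutativity assumption, and finiteness of $R$ is not actually needed for the argument. Since the graph is simple, the identity $x^2=0$ contributes no loops.
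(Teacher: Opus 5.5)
Your proof is correct and follows the paper's argument essentially verbatim: $x^2\in I^2=0$ makes each nonzero $x\in I$ a vertex, and $xy\in I^2=0$ makes any two distinct such elements adjacent. Your side remarks are also accurate: finiteness of $R$ is not needed, and the witness $y=x$ already satisfies the one-sided zero-divisor definition.
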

	
	\begin{proof}
		If $0\neq x\in I$, then $x^2=0$, so $x$ is a zero divisor. If $x,y\in I\setminus\{0\}$ are distinct, then $xy=0$ because $I^2=0$. Hence every two distinct nonzero elements of $I$ are adjacent.
	\end{proof}
	
	\medskip
	In $\LL_{2^t}$ the ideal $2^{\lceil t/2\rceil}\LL_{2^t}$ is square-zero and has an explicit order; this gives the quantitative statement below.
	\begin{theorem}\label{thm:clique2power}
		Let $t\ge 2$ and put $s=\left\lceil \tfrac{t}{2}\right\rceil$. Then the set $\bigl(2^s\LL_{2^t}\bigr)\setminus\{0\}$ induces a complete subgraph of $G_{2^t}$. Consequently,
		$$
		\omega(G_{2^t})\ge c_t:=|2^s\LL_{2^t}|-1
		=2^{4(t-s)}-1
		=2^{4\lfloor t/2\rfloor}-1,
		$$
		and, after a suitable permutation of vertices,
		$$
		A_{2^t}\sim
		\begin{pmatrix}
			\J_{c_t}-\I_{c_t} & *\\
			* & *
		\end{pmatrix}.
		$$
	\end{theorem}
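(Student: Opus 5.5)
The plan is to verify the two hypotheses of Lemma~\ref{lem:squarezero} for $I=2^s\LL_{2^t}$, compute $|I|$, and then read off the clique bound and the block form.

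\emph{Step 1: $I$ is a nonzero square-zero ideal.} Since $2$ is central in $\LL_{2^t}$, the set $I=2^s\LL_{2^t}$ is a two-sided ideal. For $x=2^sa$ and $y=2^sb$ in $I$ we get $xy=2^{2s}ab$. Because $2s\ge t$, we have $2^{2s}\equiv 0 \pmod{2^t}$, so $I^2=0$. The condition $t\ge2$ gives $s=\lceil t/2\rceil\le t-1$, so $2^s\neq0$ in $\Z_{2^t}$ and $I\neq0$. Lemma~\ref{lem:squarezero} then shows that $I\setminus\{0\}$ induces a complete subgraph of $G_{2^t}$. The elements of $I\setminus\{0\}$ are zero divisors, hence vertices, which is also part of that lemma.

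\emph{Step 2: count $I$.} The ring $\LL_{2^t}$ is free of rank $4$ over $\Z_{2^t}$ with basis $1,i,j,k$, and multiplication by $2^s$ acts coordinatewise. Hence $I\cong(2^s\Z_{2^t})^4$. The subgroup $2^s\Z_{2^t}$ has order $2^{t-s}$, so $|I|=2^{4(t-s)}$. Finally, $t-\lceil t/2\rceil=\lfloor t/2\rfloor$, which gives $c_t=2^{4\lfloor t/2\rfloor}-1$ and therefore $\omega(G_{2^t})\ge c_t$.

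\emph{Step 3: the block form.} Order the vertices so that the $c_t$ elements of $I\setminus\{0\}$ come first. Since they are pairwise adjacent and there are no loops, the leading principal block of the permuted adjacency matrix is $\J_{c_t}-\I_{c_t}$. Reordering vertices is a permutation similarity, so this gives the displayed form.

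No step is a real obstacle. The only points needing care are to use $t\ge2$ so that $I$ is nonzero, and to keep the floor/ceiling identity straight in the count.
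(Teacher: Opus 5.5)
Your proposal is correct and follows essentially the same route as the paper: show that $2^{\lceil t/2\rceil}\LL_{2^t}$ is square-zero, count it through the additive structure $(\Z_{2^t})^4$, apply Lemma~\ref{lem:squarezero}, and list the clique vertices first to get the block form. You also state two points the paper leaves implicit. The ideal is two-sided because $2$ is central, and it is nonzero because $t\ge2$ forces $s\le t-1$.
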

	
	\begin{proof}
		As an additive group, $\LL_{2^t}\cong(\mathbb Z_{2^t})^4$. Hence
		$$
		|2^s\LL_{2^t}|=|2^s\mathbb Z_{2^t}|^4=2^{4(t-s)}.
		$$
		Moreover,
		$$
		\bigl(2^s\LL_{2^t}\bigr)^2\subseteq 2^{2s}\LL_{2^t}=0,
		$$
		because $2s\ge t$. Thus $2^s\LL_{2^t}$ is a square-zero ideal, and Lemma~\ref{lem:squarezero} shows that its nonzero elements induce a clique. Its order is $2^{4(t-s)}-1$, and the identity $t-\lceil t/2\rceil=\lfloor t/2\rfloor$ gives the displayed value of $c_t$. Ordering these clique vertices first makes the corresponding principal block equal to the adjacency matrix of $K_{c_t}$, namely $\J_{c_t}-\I_{c_t}$.
	\end{proof}
	
	\medskip
	The square-zero-ideal observation is general, while the explicit choice $2^{\lceil t/2\rceil}\LL_{2^t}$ and its cardinality are specific to the present two-adic family. This produces a concrete complete principal block $\J_{c_t}-\I_{c_t}$ and, in turn, explicit spectral and combinatorial bounds. 
	
	\medskip
	The following is an immediate consequence.
	\begin{corollary}\label{cor:2powerbounds}
		For $t\ge 2$, $\rho(A_{2^t})\ge c_t-1=2^{4\lfloor t/2\rfloor}-2,$ 	and $	|E(G_{2^t})|\ge \binom{c_t}{2}.$ 
	\end{corollary}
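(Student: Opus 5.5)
Both bounds come from the complete principal block supplied by Theorem~\ref{thm:clique2power}. After a suitable permutation of the vertices, $A_{2^t}$ contains the principal submatrix $\J_{c_t}-\I_{c_t}$, which is the adjacency matrix of $K_{c_t}$. Here $c_t=2^{4\lfloor t/2\rfloor}-1\ge 15$ for $t\ge2$, so the block is nontrivial.

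For the spectral radius, I will use interlacing, or equivalently the monotonicity of the Perron root under passing to principal submatrices of a nonnegative matrix. The largest eigenvalue of a principal submatrix of a real symmetric matrix is at most the largest eigenvalue of the whole matrix. The largest eigenvalue of $\J_{c_t}-\I_{c_t}$ is $c_t-1$, with eigenvector $\one$.

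If one prefers to avoid interlacing, the Rayleigh quotient gives the same result directly. Take $z$ to be the indicator vector of the clique vertices. Then
$$
z^{\top}A_{2^t}z=c_t(c_t-1)\quad\text{and}\quad z^{\top}z=c_t.
$$
Hence $\rho(A_{2^t})\ge \lambda_{\max}(A_{2^t})\ge z^{\top}A_{2^t}z/z^{\top}z=c_t-1=2^{4\lfloor t/2\rfloor}-2$.

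For the edge count, the clique on $c_t$ vertices found in Theorem~\ref{thm:clique2power} is a subgraph of $G_{2^t}$. It therefore contributes $\binom{c_t}{2}$ distinct edges, which gives $|E(G_{2^t})|\ge\binom{c_t}{2}$.

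The argument has no real obstacle. The only point to check is that the permutation similarity in Theorem~\ref{thm:clique2power} preserves the spectrum, so the bound transfers to $A_{2^t}$ itself; this holds because permutation similarity is an orthogonal similarity.
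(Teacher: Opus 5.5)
Your proposal is correct and matches the paper's proof: the principal block $\J_{c_t}-\I_{c_t}$ from Theorem~\ref{thm:clique2power} gives $\rho(A_{2^t})\ge c_t-1$ by monotonicity of the spectral radius on principal submatrices, and the clique $K_{c_t}$ gives the edge bound. Your Rayleigh-quotient computation with the indicator vector of the clique is a valid self-contained substitute for the monotonicity step.
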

	
	\begin{proof}
		The adjacency matrix of the clique $K_{c_t}$ is $\J_{c_t}-\I_{c_t}$, whose spectral radius is $c_t-1$. Since this matrix is a principal submatrix of $A_{2^t}$, so Perron monotonicity for nonnegative symmetric matrices implies that  $\rho(A_{2^t})\ge c_t-1.$ The edge bound is immediate,  as $G_{2^t}$ contains a $K_{c_t}$ subgraph.
	\end{proof}
	
	\medskip
	The powers of $2$ also contain nested annihilating layers. They give complete bipartite subgraphs that can be substantially larger, in spectral terms, than the square-zero clique.
	
	\begin{theorem}\label{thm:annihilatorlayers}
		Let $t\ge2$, let $0\le a<b<t$ satisfy $a+b\ge t$, and let
		$
		I_r=2^r\LL_{2^t}.
		$
		Then the two disjoint sets
		$
		X=I_a\setminus I_b,
		$ and $
		Y=I_b\setminus\{0\}
		$
		are completely joined in $G_{2^t}$. Consequently $G_{2^t}$ contains a subgraph
		$
		K_{m_{a,b},n_b},
		$
		where
		$
		m_{a,b}=2^{4(t-a)}-2^{4(t-b)},
		$ and $
		n_b=2^{4(t-b)}-1.
		$
		In particular,
		$
		\rho(A_{2^t})\ge\sqrt{m_{a,b}n_b},
		$ and $
		|E(G_{2^t})|\ge m_{a,b}n_b.
		$
	\end{theorem}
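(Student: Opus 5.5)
The plan is to follow the model of Theorem~\ref{thm:clique2power}: first establish complete joining from the ideal arithmetic, then count the two layers, then read off the spectral and edge bounds from the resulting principal submatrix.

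\emph{Complete joining.} Since $2$ is central in $\LL_{2^t}$, each $I_r=2^r\LL_{2^t}$ is a two-sided ideal, and $I_aI_b\subseteq 2^{a+b}\LL_{2^t}=0$ because $a+b\ge t$. So for any $x\in X\subseteq I_a$ and $y\in Y\subseteq I_b$ we have $xy=0$, and also $yx=0$. Next, $a<b$ gives $I_b\subseteq I_a$, so $X=I_a\setminus I_b$ and $Y=I_b\setminus\{0\}$ are disjoint; in particular $x\neq y$. Both sets consist of vertices of $G_{2^t}$:
\begin{itemize}
\item $X$ is nonzero because $0\in I_b$;
\item every nonzero $x\in I_a$ is annihilated by the nonzero element $2^{t-1}$;
\item likewise every $y\in Y$ is annihilated by $2^{t-1}$.
\end{itemize}
Hence every pair $x\in X$, $y\in Y$ is an edge.

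\emph{Counting.} As additive groups $I_r\cong(2^r\Z_{2^t})^4$, so $|I_r|=2^{4(t-r)}$. Therefore $|X|=2^{4(t-a)}-2^{4(t-b)}=m_{a,b}$ and $|Y|=2^{4(t-b)}-1=n_b$, and the edges found above form a $K_{m_{a,b},n_b}$ subgraph. The condition $b<t$ guarantees $n_b\ge 15>0$, and $a<b$ guarantees $m_{a,b}>0$, so both sides are nonempty.

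\emph{Spectral and edge bounds.} Order the vertices with $X\cup Y$ first. The principal submatrix $P$ of $A_{2^t}$ on $X\cup Y$ is nonnegative and entrywise dominates the adjacency matrix $N$ of $K_{m,n}$. Extra edges inside $X$ or inside $Y$ are possible, but they only increase entries. The spectral radius of $N$ is $\sqrt{mn}$. Perron monotonicity for nonnegative matrices then gives
\[
\rho(A_{2^t})\ge\rho(P)\ge\rho(N)=\sqrt{m_{a,b}n_b},
\]
by the same principal-submatrix argument as in Corollary~\ref{cor:2powerbounds}. The edge bound follows by counting the $m_{a,b}n_b$ cross edges.

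No step is a real obstacle. The only care points are checking that all elements of $X$ and $Y$ are genuine vertices, which is handled by $2^{t-1}$, and noting that the layers are disjoint even though $I_b\subseteq I_a$.
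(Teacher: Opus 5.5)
Your proposal is correct and follows the paper's proof essentially step for step: centrality of the scalar $2$ gives $I_aI_b=I_bI_a\subseteq 2^{a+b}\LL_{2^t}=0$, additive counting gives $|I_r|=2^{4(t-r)}$, and Perron monotonicity on the principal submatrix indexed by $X\cup Y$ gives the spectral-radius bound. Your extra check that $X$ and $Y$ consist of genuine vertices is a fine addition. Note, however, that annihilation of $I_a$ by $2^{t-1}$ silently uses $a\ge t-b\ge 1$, which the hypotheses force; vertex membership also follows directly from the cross annihilation, since both layers are nonempty.
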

	
	\begin{proof}
		The ideals are nested because $a<b$, so $I_b\subset I_a$ and the sets $X$ and $Y$ are disjoint. Since the scalar powers of $2$ are central,
		$$
		I_aI_b\subseteq2^{a+b}\LL_{2^t}=0,
		\qquad
		I_bI_a\subseteq2^{a+b}\LL_{2^t}=0,
		$$
		because $a+b\ge t$. Thus every $x\in X$ and $y\in Y$ satisfy both $xy=0$ and $yx=0$, so every cross pair is an edge of $G_{2^t}$.
		
		As additive groups, $I_r\cong(2^r\mathbb Z_{2^t})^4$, and hence $|I_r|=2^{4(t-r)}$. Therefore
		$$
		|X|=|I_a|-|I_b|=m_{a,b},
		\qquad
		|Y|=|I_b|-1=n_b.
		$$
		The cross edges give the stated complete bipartite subgraph and the edge bound. The adjacency matrix on $X\cup Y$ entrywise dominates the adjacency matrix of $K_{m_{a,b},n_b}$, whose spectral radius is $\sqrt{m_{a,b}n_b}$. Perron monotonicity on this principal vertex set, followed by principal-submatrix monotonicity, yields the spectral-radius bound.
	\end{proof}
	\medskip
	
	The following consequence follows from above result.
	\begin{corollary}\label{cor:oddtexplicit}
		If $t=2m+1\ge3$, then
		$$
		\rho(A_{2^t})\ge
		\sqrt{15\cdot2^{4m}\bigl(2^{4m}-1\bigr)}.
		$$
		This is strictly stronger than the clique bound $\rho(A_{2^t})\ge2^{4m}-2$ from Corollary~\ref{cor:2powerbounds}.
	\end{corollary}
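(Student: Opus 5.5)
The plan is to apply Theorem~\ref{thm:annihilatorlayers} with the specific choice $t=2m+1$, $a=m$, $b=m+1$. These parameters are admissible: $0\le m<m+1<2m+1$ holds because $m\ge1$, and $a+b=2m+1=t$.

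With these values, $t-a=m+1$ and $t-b=m$. Hence
$$
m_{a,b}=2^{4(m+1)}-2^{4m}=15\cdot 2^{4m},
\qquad
n_b=2^{4m}-1.
$$
Theorem~\ref{thm:annihilatorlayers} then gives directly $\rho(A_{2^t})\ge\sqrt{15\cdot 2^{4m}(2^{4m}-1)}$.

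For the comparison, note that $\lfloor t/2\rfloor=m$. So Corollary~\ref{cor:2powerbounds} gives the clique bound $2^{4m}-2$. Write $N=2^{4m}\ge 16$. Both sides are positive, so it suffices to compare squares. We need
$$
15N(N-1)>(N-2)^2=N^2-4N+4.
$$
This is equivalent to $14N^2-11N-4>0$, which clearly holds for $N\ge16$; indeed $14N^2-11N-4\ge 14\cdot 16N-11N-4>0$.

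A cleaner alternative is to observe that $15N(N-1)\ge N^2>(N-2)^2$ as soon as $15(N-1)\ge N$, which holds for every $N\ge2$.

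There is no real obstacle here. The only points needing care are checking the admissibility constraint $b<t$, which uses $m\ge1$, and noting the strict inequality in the comparison.
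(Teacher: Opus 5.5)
Your proposal is correct and follows the paper's proof: apply Theorem~\ref{thm:annihilatorlayers} with $a=m$, $b=m+1$ to get $m_{m,m+1}=15\cdot 2^{4m}$ and $n_{m+1}=2^{4m}-1$, then compare squares with $(2^{4m}-2)^2$. The paper only asserts the square comparison; you additionally check admissibility ($b<t$ requires $m\ge1$) and verify the inequality explicitly via $15N(N-1)\ge N^2>(N-2)^2$.
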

	
	\begin{proof}
		Apply Theorem~\ref{thm:annihilatorlayers} with $a=m$ and $b=m+1$. Then
		$$
		m_{m,m+1}=2^{4(m+1)}-2^{4m}=15\cdot2^{4m},
		$$
		and $n_{m+1}=2^{4m}-1$. For $m\ge1$, the square of the new bound exceeds $(2^{4m}-2)^2$, so the inequality is strict.
	\end{proof}
	
	\medskip
	
	The next result discuss the inertia.
	\begin{theorem}\label{thm:twoadic-inertia}
		Let $t\ge2$, let $N_t=|V(G_{2^t})|=2^{4t-1}-1$, and let $n_+(A_{2^t})$, $n_-(A_{2^t})$, and $n_0(A_{2^t})$ denote the positive, negative, and zero inertia indices. With $c_t=2^{4\lfloor t/2\rfloor}-1$ as in Theorem~\ref{thm:clique2power},
		$$
		n_+(A_{2^t})\ge1,
		\qquad
		n_-(A_{2^t})\ge c_t-1.
		$$
		Consequently,
		$$
		\rank(A_{2^t})\ge c_t,
		\qquad
		n_0(A_{2^t})\le N_t-c_t.
		$$
	\end{theorem}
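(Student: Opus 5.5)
The argument is Cauchy interlacing applied to the complete principal block from Theorem~\ref{thm:clique2power}. After the permutation there, $A_{2^t}$ has the principal submatrix $\J_{c_t}-\I_{c_t}$. Its eigenvalues are $c_t-1$ with multiplicity one and $-1$ with multiplicity $c_t-1$. We have $c_t\ge 15$ for $t\ge2$, so $c_t-1>0$.

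Order the eigenvalues of $A_{2^t}$ as $\theta_1\ge\cdots\ge\theta_{N_t}$, and those of the principal submatrix $B=\J_{c_t}-\I_{c_t}$ as $\beta_1\ge\cdots\ge\beta_{c_t}$. Cauchy interlacing gives
\[
\theta_i\ge\beta_i \qquad\text{and}\qquad \theta_{N_t-c_t+i}\le\beta_i \qquad\text{for } 1\le i\le c_t.
\]
The first inequality with $i=1$ yields $\theta_1\ge c_t-1>0$, so $n_+(A_{2^t})\ge1$. This also agrees with Corollary~\ref{cor:2powerbounds}.

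For the negative part, take $i=2,\dots,c_t$ in the second inequality. This gives $\theta_{N_t-c_t+i}\le\beta_i=-1<0$. So the $c_t-1$ smallest eigenvalues $\theta_{N_t-c_t+2},\dots,\theta_{N_t}$ are all negative, and hence $n_-(A_{2^t})\ge c_t-1$.

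The rank bound follows because $\rank(A_{2^t})=n_+(A_{2^t})+n_-(A_{2^t})\ge 1+(c_t-1)=c_t$. The nullity bound follows from $n_0(A_{2^t})=N_t-\rank(A_{2^t})\le N_t-c_t$. Here $N_t=2^{4t-1}-1$ is taken from Theorem~\ref{thm:known}.

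Alternatively, the rank bound can be read off directly: $\J_{c_t}-\I_{c_t}$ is nonsingular (its determinant is $(-1)^{c_t-1}(c_t-1)\neq0$), and a matrix has rank at least that of any of its submatrices.

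There is no serious obstacle. The one point needing care is the index bookkeeping in the upper interlacing inequality, so that exactly $c_t-1$ eigenvalues are forced below zero. One must also observe that $n_+\ge1$ and $n_-\ge c_t-1$ count disjoint sets of eigenvalues, which is immediate since positive and negative eigenvalues are distinct.
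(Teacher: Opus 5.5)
Your proof is correct and follows essentially the paper's argument: the paper also takes the principal block $\J_{c_t}-\I_{c_t}$ from Theorem~\ref{thm:clique2power}, notes its inertia $(1,c_t-1,0)$, transfers $n_+\ge1$ and $n_-\ge c_t-1$ to $A_{2^t}$ by Cauchy interlacing, and then reads off the rank and nullity bounds as you do. The only difference is that you write out the interlacing indices explicitly, while the paper compresses this into a one-line appeal to domination of inertia by principal submatrices.
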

	
	\begin{proof}
		By Theorem~\ref{thm:clique2power}, $A_{2^t}$ has a principal submatrix equal to $\J_{c_t}-\I_{c_t}$. Its spectrum is
		$$
		\{c_t-1,(-1)^{[c_t-1]}\},
		$$
		so its inertia is $(1,c_t-1,0)$. The inertia of a real symmetric principal submatrix is dominated componentwise by the inertia of the full matrix; equivalently, this follows from Cauchy interlacing. Hence $A_{2^t}$ has at least one positive and at least $c_t-1$ negative eigenvalues. The rank and nullity bounds follow immediately from
		$$
		\rank(A_{2^t})=n_+(A_{2^t})+n_-(A_{2^t})
		$$
		and $n_0(A_{2^t})=N_t-\rank(A_{2^t})$.
	\end{proof}
	
	\section{Algorithmic construction and numerical illustrations}\label{sec:algorithm}
	
	The block model of Section~\ref{sec:oddprime} leads to a natural algorithm for constructing $A_p$ when $p$ is an odd prime.
	Rather than verifying adjacency through the multiplication of all singular matrices in $M_2(\F_p)$, one might construct the matrix based on projective-line incidence. This replaces vertex-pair ring multiplications by incidence tests on the much smaller type set. The compressed block description is inexpensive to obtain, although expanding it into a dense matrix still has the unavoidable cost of writing all matrix entries.
	
	\begin{algorithm}[H]
		\caption{Structured construction of $A_p$ for an odd prime $p$}
		\label{alg:Ap}
		\begin{algorithmic}[1]
			\Require Odd prime $p$
			\Ensure Adjacency matrix $A_p$ of $G_p$
			\State Enumerate the $p+1$ lines of $\PP^1(\F_p)$ as $L_0,\dots,L_p$
			\State Form the set of types $\calT=\{(L_r,L_s):0\le r,s\le p\}$
			\State For each type $(L_r,L_s)$, create a class of $p-1$ vertex labels
			\State Initialize a zero matrix of order $(p+1)^2(p-1)$
			\For{each ordered pair of types $\alpha=(L_r,L_s)$, $\beta=(L_u,L_v)$}
			\If{$\alpha=\beta$ and $L_r=L_s$}
			\State place the block $\J_{p-1}-\I_{p-1}$ in position $(\alpha,\beta)$
			\ElsIf{$\alpha\neq\beta$ and $(L_s=L_u \text{ or } L_v=L_r)$}
			\State place the block $\J_{p-1}$ in position $(\alpha,\beta)$
			\Else
			\State place the block $\O_{p-1}$ in position $(\alpha,\beta)$
			\EndIf
			\EndFor
			\State \Return $A_p$
		\end{algorithmic}
	\end{algorithm}
	
	\begin{proposition}\label{prop:algcorrect}
		Algorithm~\ref{alg:Ap} returns the adjacency matrix of $G_p$. Its type-incidence stage uses $O(p^4)$ decisions. The compressed block description therefore has $O(p^4)$ decision complexity, whereas a naive vertex-pair method uses $O(|V(G_p)|^2)=O(p^6)$ ring-multiplication tests. If the full dense matrix is explicitly materialized, however, its $\Theta(p^6)$ entries force $\Theta(p^6)$ output and storage cost.
	\end{proposition}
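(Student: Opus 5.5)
Correctness will follow directly from Theorem~\ref{thm:block}. Algorithm~\ref{alg:Ap} labels the vertices class by class, in an ordering of $\calT$ with $p-1$ labels in each class $\calC_{L_r,L_s}$; this is exactly the ordering used in the proof of that theorem. For each ordered pair of types $(\alpha,\beta)$, the three branches of the conditional reproduce the three cases of the block formula $B_{\alpha,\beta}$ verbatim. The branch $\alpha=\beta$ with $L_r=L_s$ places $\J_{p-1}-\I_{p-1}$. The branch $\alpha\neq\beta$ with $L_s=L_u$ or $L_v=L_r$ places $\J_{p-1}$. All remaining pairs receive $\O_{p-1}$. One remark is needed here: a diagonal pair $\alpha=\beta$ with $L_r\neq L_s$ falls into the final branch. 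This is correct, since Theorem~\ref{thm:block} gives $\O_{p-1}$ there. Hence the output equals $H_p\otimes\J_{p-1}-D_p\otimes\I_{p-1}$ in the type ordering. By Theorem~\ref{thm:block} this is the adjacency matrix of $G_p$, with vertices labelled by that ordering, so it is $A_p$ up to the permutation similarity stated there.

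For the complexity statements, I will count as follows. Enumerating $\PP^1(\F_p)$ costs $O(p)$, and forming $\calT$ costs $O(p^2)$, because $|\calL|=p+1$ by Lemma~\ref{lem:classsize}. The loop runs over $|\calT|^2=(p+1)^4=O(p^4)$ ordered pairs of types. Each iteration performs a bounded number of equality tests between line indices, which is $O(1)$ once the lines are indexed $0,\dots,p$. So the type-incidence stage, and hence the compressed description given by $H_p$, $D_p$ and the block size $p-1$, uses $O(p^4)$ decisions.

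For the naive method, Lemma~\ref{lem:classsize} gives $|V(G_p)|=(p+1)^2(p-1)=\Theta(p^3)$. Testing $xy=0$ or $yx=0$ for every unordered vertex pair therefore takes $\binom{|V|}{2}=\Theta(p^6)$ multiplication tests in $M_2(\F_p)$, each of constant size.

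Finally, the dense matrix has $|V(G_p)|^2=\Theta(p^6)$ entries, and each entry must be written at least once. This gives the $\Theta(p^6)$ lower bound on output and storage. For the matching upper bound, note that filling each $(p-1)\times(p-1)$ block costs $O(p^2)$, and there are $O(p^4)$ blocks, for a total of $O(p^6)$.

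The only delicate point is the word ``decisions'': the $O(p^4)$ claim must be read as counting incidence tests, not entry writes. I will make this distinction explicit so the two statements do not appear to conflict. Apart from that, no step is a real obstacle; the proof is bookkeeping on top of Theorem~\ref{thm:block}.
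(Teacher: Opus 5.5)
Your proposal is correct and follows the same route as the paper. Correctness is read off from Theorem~\ref{thm:block}. The loop over ordered type pairs gives $(p+1)^4=O(p^4)$ incidence decisions. The naive method needs $\binom{|V(G_p)|}{2}=\Theta(p^6)$ multiplication tests because $|V(G_p)|=\Theta(p^3)$, and dense materialization costs $\Theta(p^6)$. Your branch-by-branch check (including that $\alpha=\beta$ with $L_r\neq L_s$ correctly lands in the $\O_{p-1}$ branch) and your explicit $O(p^6)$ upper bound for filling the blocks are extra detail the paper leaves implicit.
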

	
	\begin{proof}
		Correctness is exactly Theorem~\ref{thm:block}. The type set has $(p+1)^2$ elements, so examining all ordered pairs of types requires $O((p+1)^4)=O(p^4)$ incidence decisions. On the other hand,
		$$
		|V(G_p)|=(p+1)^2(p-1)=\Theta(p^3),
		$$
		and a naive method that tests all vertex pairs by ring multiplication therefore performs $O(p^6)$ algebraic tests. The distinction is important: the $O(p^4)$ count concerns the compressed decision stage, not the cost of writing a dense adjacency matrix. Since $A_p$ has $\Theta(|V(G_p)|^2)=\Theta(p^6)$ entries, dense expansion and storage are necessarily $\Theta(p^6)$.
	\end{proof}
	
	We now collect numerical consequences of these facts in Tables \ref{tab:oddprimes}, \ref{tab:twopowers} and \ref{tab:complexity}.
	\begin{table}[H]
		\centering
		\caption{Numerical values of the new formulas for odd primes. Here $d_{\mathrm{diag}}$ and $d_{\mathrm{off}}$ are the two degree values from Corollary~\ref{cor:degrees}.}
		\label{tab:oddprimes}
		\begin{tabular}{ccccccc}
			\toprule
			$p$ & $|V(G_p)|$ & $d_{\mathrm{diag}}$ & $d_{\mathrm{off}}$ & $|E(G_p)|$ & $\nullity(A_p)\ge$ & $\rho(A_p)$ \\
			\midrule
			$3$ & $32$ & $13$ & $14$ & $220$ & $12$ & $\frac{11+\sqrt{273}}{2}\approx 13.7614$ \\
			$5$ & $144$ & $43$ & $44$ & $3156$ & $90$ & $\frac{39+\sqrt{2369}}{2}\approx 43.8362$ \\
			$7$ & $384$ & $89$ & $90$ & $17256$ & $280$ & $\frac{83+\sqrt{9361}}{2}\approx 89.8761$ \\
			\bottomrule
		\end{tabular}
	\end{table}
	
	\begin{table}[H]
		\centering
		\caption{Two-adic examples. The clique lower bound is the value $c_t$ from Theorem~\ref{thm:clique2power}; for $t=1$ the exact graph is given in Theorem~\ref{thm:n2}.}
		\label{tab:twopowers}
		\begin{tabular}{ccccc}
			\toprule
			$t$ & $n=2^t$ & $|V(G_{2^t})|$ & clique size available & spectral radius bound \\
			\midrule
			$1$ & $2$ & $7$ & exact graph $F_3$ & $\rho(A_2)=3$ \\
			$2$ & $4$ & $127$ & $15$ & $\rho(A_4)\ge 14$ \\
			$3$ & $8$ & $2047$ & $15$ & $\rho(A_8)\ge 14$ \\
			$4$ & $16$ & $32767$ & $255$ & $\rho(A_{16})\ge 254$ \\
			\bottomrule
		\end{tabular}
	\end{table}
	
	\begin{table}[H]
		\centering
		\caption{Comparison of adjacency-decision counts for naive vertex testing and the compressed type-based construction in Algorithm~\ref{alg:Ap}. Dense materialization of $A_p$ still requires $\Theta(p^6)$ output/storage.}
		\label{tab:complexity}
		\begin{tabular}{cccc}
			\toprule
			$p$ & $|V(G_p)|$ & naive unordered pair tests $\binom{|V|}{2}$ & type-incidence tests $(p+1)^4$ \\
			\midrule
			$3$ & $32$ & $496$ & $256$ \\
			$5$ & $144$ & $10296$ & $1296$ \\
			$7$ & $384$ & $73536$ & $4096$ \\
			\bottomrule
		\end{tabular}
	\end{table}
	Tables~\ref{tab:oddprimes}--\ref{tab:complexity} illustrate the principal numerical implications of the theory. Table~\ref{tab:oddprimes} presents the explicit formulas for odd primes, Table~\ref{tab:twopowers} illustrates the clique-based lower bounds within the two-adic family, and Table~\ref{tab:complexity} contrasts the structured construction of $A_p$ with naive adjacency testing. Together, the tables show the regularity created by the type partition and the reduction in adjacency-decision work obtained from the compressed block model. They do not change the $\Theta(p^6)$ size of the full dense output.

	\begin{example}
		For $p=5$, Theorem~\ref{thm:block} yields a $144\times 144$ adjacency matrix built from $36$ classes of size $4$.  From Theorem~\ref{thm:charfactor}, the reduced matrix $B_5$ has order $36$, while the quotient matrix $Q_5$ from Proposition~\ref{prop:equitable} already determines the spectral radius
		$$
		Q_5=
		\begin{pmatrix}
			3 & 40\\
			8 & 36
		\end{pmatrix},
		\qquad
		\rho(A_5)=\frac{39+\sqrt{2369}}{2}\approx 43.8362.
		$$
		At the same time, Corollary~\ref{cor:nullity} guarantees $\nullity(A_5)\ge 90$, so more than $62\%$ of the ambient space is forced into the kernel by the type repetition alone.
	\end{example}
	
	\begin{remark}
		Tables~\ref{tab:oddprimes}--\ref{tab:complexity} demonstrate two complementary aspects of the theory. The odd-prime family demonstrates a well-organized repetition: although the dense matrix is large, the reduced spectral problem is substantially smaller. Secondly, the two-adic family demonstrates large principal blocks, even in the absence of a perfect full reduction.
	\end{remark}
	
	%
	
	\section{Energy of the adjacency matrices}\label{sec:energy}
	
	The preceding adjacency-matrix decomposition also gives information about the \emph{energy} of the graph. Energy serves as a fundamental spectral invariant in algebraic graph theory and mathematical chemistry, as it quantifies the overall magnitude of the adjacency spectrum, represents the trace norm of the adjacency matrix, and frequently exhibits sensitivity to repeated rows, equitable quotients, and low-rank reductions \cite{gutman2001energy,li2012energy,nikiforov2007energy,nikiforov2016energy,bilalqm}.
	Sections~\ref{sec:spectral} and \ref{sec:twopower} provide two complementary inputs for energy estimates: a reduced spectral model in the odd-prime case and large complete principal blocks in the two-adic case.
	
	Energy complements the vertex, edge, spectral-radius, and nullity data by measuring the total absolute mass of the adjacency spectrum. In the present setting it also shows the contrast between the odd-prime family, where the spectrum has a substantial reduced component, and the two-adic family, where our general information comes from large forced subgraphs.
	
	Let $G$ be a finite simple graph with adjacency eigenvalues $\lambda_1,\lambda_2,\dots,\lambda_{|V(G)|}.$ The \emph{energy} of $G$ is $\mathcal E(G):=\sum_{r=1}^{|V(G)|} |\lambda_r|.$ Since $A(G)$ is real symmetric, this is exactly the trace norm: $\mathcal E(G)=\|A(G)\|_*$.  
	
	\medskip
	We record two elementary inequalities that will be used repeatedly.
	\begin{proposition}\label{prop:energybasic}
		Let $M$ be a nonzero real symmetric matrix with eigenvalues $\mu_1,\ldots,\mu_m$ and spectral radius $\rho(M)=\max_i|\mu_i|$. Then
		$$
		\|M\|_*\ge \frac{\tr(M^2)}{\rho(M)}.
		$$
		If $M=A(G)$ is the adjacency matrix of a finite simple graph $G$, then in addition
		$$
		\mathcal E(G)\ge 2\rho(A(G)),
		\qquad
		\mathcal E(G)\ge \frac{2|E(G)|}{\rho(A(G))}.
		$$
	\end{proposition}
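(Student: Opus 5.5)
The first inequality follows by bounding each squared eigenvalue by $\rho(M)$ times its absolute value. Since $M$ is real symmetric, its eigenvalues $\mu_1,\ldots,\mu_m$ are real and
$$
\tr(M^2)=\sum_{i=1}^m\mu_i^2 .
$$
For each $i$ we have $\mu_i^2=|\mu_i|\,|\mu_i|\le \rho(M)\,|\mu_i|$. Summing gives
$$
\tr(M^2)\le \rho(M)\sum_{i=1}^m|\mu_i|=\rho(M)\,\|M\|_*,
$$
where we use that for a symmetric matrix the trace norm equals the sum of absolute eigenvalues. Because $M\neq0$, some eigenvalue is nonzero, so $\rho(M)>0$ and we may divide.

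For the graph statements, put $M=A(G)$ and assume $G$ has at least one edge, which is implicit since $M$ is nonzero. The zero diagonal gives $\tr(A)=\sum_r\lambda_r=0$. The diagonal entries of $A^2$ are the vertex degrees, so $\tr(A^2)=\sum_v\deg(v)=2|E(G)|$. The second bound, $\mathcal E(G)\ge 2|E(G)|/\rho(A(G))$, is then exactly the first inequality specialized to this trace.

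For $\mathcal E(G)\ge 2\rho(A(G))$ I would not use the trace bound, but rather the vanishing trace. By Perron--Frobenius for the nonnegative matrix $A$, the spectral radius is itself an eigenvalue, say $\lambda_1=\rho$. Since the eigenvalues sum to zero, the negative eigenvalues satisfy $\sum_{\lambda_r<0}|\lambda_r|=\sum_{\lambda_r>0}\lambda_r\ge\rho$. Therefore
$$
\mathcal E(G)=2\sum_{\lambda_r>0}\lambda_r\ge 2\rho .
$$

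The only delicate point is justifying that $\rho$ is attained by a positive eigenvalue rather than only by $|\lambda|$ for some negative $\lambda$. Perron--Frobenius for nonnegative matrices supplies this without any irreducibility assumption.
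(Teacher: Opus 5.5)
Your proof is correct and takes essentially the paper's route. You bound $\mu_i^2\le\rho(M)|\mu_i|$ termwise for the trace inequality, specialize it via $\tr(A^2)=2|E(G)|$, and get $\mathcal E(G)\ge 2\rho(A(G))$ from the vanishing trace and the fact that $\rho(A(G))$ is itself a positive eigenvalue; the only difference is that you spell out the Perron--Frobenius justification for that last fact, which the paper simply asserts.
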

	
	\begin{proof}
		Since $M$ is real symmetric,
		$$
		\tr(M^2)=\sum_{i=1}^m\mu_i^2
		\le \rho(M)\sum_{i=1}^m|\mu_i|
		=\rho(M)\|M\|_*,
		$$
		which proves the first inequality. If $M=A(G)$, then $\tr A(G)=0$. Hence the sum of the positive eigenvalues equals the absolute value of the sum of the negative eigenvalues, and therefore $\mathcal E(G)$ is twice the sum of the positive eigenvalues. Since $\rho(A(G))$ is a positive eigenvalue whenever $G$ has an edge, $\mathcal E(G)\ge2\rho(A(G))$. Finally, $\tr(A(G)^2)=2|E(G)|$, so the first inequality gives the second graph bound. The edgeless case is immediate.
	\end{proof}

	\medskip
	The next theorem gives an exact energy reduction for the odd-prime graphs. It isolates the contribution of the forced $-1$-eigenspaces and leaves the remaining energy in the reduced matrix of order $(p+1)^2$.	
	\begin{theorem}\label{thm:energyoddprime}
		Let $p$ be an odd prime, and let $B_p=(p-1)H_p-D_p$ be the reduced matrix from Theorem~\ref{thm:charfactor}. Then
		$\mathcal E(G_p)=(p+1)(p-2)+\mathcal E(B_p).$
	\end{theorem}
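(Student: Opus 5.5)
The identity follows directly from the characteristic polynomial factorization in Theorem~\ref{thm:charfactor}, so the plan is to read off the spectrum of $A_p$ as a multiset and sum absolute values. By that theorem,
$$
\chi_{A_p}(\lambda)=\lambda^{\,p(p+1)(p-2)}(\lambda+1)^{(p+1)(p-2)}\chi_{B_p}(\lambda).
$$
Hence the eigenvalues of $A_p$, counted with multiplicity, are three disjoint pieces. There is $0$ with multiplicity $p(p+1)(p-2)$. There is $-1$ with multiplicity $(p+1)(p-2)$. The third piece is the full spectrum of $B_p$.

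The energy is additive over such a multiset decomposition. The zero eigenvalues contribute nothing, and the $-1$ eigenvalues contribute $(p+1)(p-2)\cdot|-1|=(p+1)(p-2)$. The remaining contribution is $\sum|\mu|$ over the eigenvalues $\mu$ of $B_p$ with multiplicity.

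The one point needing care is what $\mathcal E(B_p)$ means, since $B_p=(p-1)H_p-D_p$ need not be symmetric as written in the basis $\{\one_{L,M}\}$. Actually $H_p$ is symmetric, because its defining condition is symmetric in the two types, and $D_p$ is diagonal. Therefore $B_p$ is real symmetric, its eigenvalues are real, and $\mathcal E(B_p)=\sum|\mu_i|=\|B_p\|_*$ is unambiguous.

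No cancellation or double counting can occur between the pieces. The factorization in Theorem~\ref{thm:charfactor} is exact and does not assume the pieces have disjoint spectra. Corollary~\ref{cor:exactnullityrank} confirms that $B_p$ has no eigenvalue $0$ or $-1$, but this is not even needed for the sum. Adding the three contributions gives
$$
\mathcal E(G_p)=(p+1)(p-2)+\mathcal E(B_p).
$$

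As an optional consistency check, one can substitute the explicit spectrum from Theorem~\ref{thm:fullreducedspectrum} to get a closed form. The only sign issue there is $\mu_-<0<\mu_+$ and $\lambda_-<0<\lambda_+$, which follows because the products $\mu_+\mu_-$ and $\lambda_+\lambda_-$ are negative.
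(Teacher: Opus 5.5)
Your proof is correct and follows essentially the same route as the paper: read off the eigenvalue multiset of $A_p$ from the factorization $\chi_{A_p}(\lambda)=\lambda^{p(p+1)(p-2)}(\lambda+1)^{(p+1)(p-2)}\chi_{B_p}(\lambda)$ in Theorem~\ref{thm:charfactor}, then sum absolute values. Your remark that $H_p$ is symmetric, so $B_p$ is real symmetric and $\mathcal E(B_p)$ is an unambiguous sum of absolute values of real eigenvalues, is a worthwhile detail that the paper leaves implicit.
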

	
	\begin{proof}
		By Theorem~\ref{thm:charfactor}, we have
		$$
		\chi_{A_p}(\lambda)
		=
		\lambda^{\,p(p+1)(p-2)}(\lambda+1)^{(p+1)(p-2)}\chi_{B_p}(\lambda).
		$$
		Hence the eigenvalues of $A_p$ consist of the eigenvalues of $B_p$, together with $p(p+1)(p-2)$ zeros and $(p+1)(p-2)$ copies of $-1$. Taking absolute values and summing gives
		$$
		\mathcal E(G_p)=0\cdot p(p+1)(p-2)+1\cdot (p+1)(p-2)+\mathcal E(B_p).
		$$
	\end{proof}
	
	\medskip
	The complete reduced spectrum from Theorem~\ref{thm:fullreducedspectrum} turns the preceding reduction into a closed formula.
	
	\begin{theorem}\label{thm:exactenergyodd}
		For every odd prime $p$,
		$$
		\mathcal E(G_p)
		=2p^3-2p^2-p-1
		+\sqrt{4p^4-4p^2-8p+9}
		+p\sqrt{p^4-8p+8}.
		$$
		In particular, $\mathcal E(G_p)=3p^3+O(p^2)$ as $p\to\infty$.
	\end{theorem}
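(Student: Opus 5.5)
The plan is to combine Theorem~\ref{thm:energyoddprime} with the explicit reduced spectrum from Theorem~\ref{thm:fullreducedspectrum}. By Theorem~\ref{thm:energyoddprime}, $\mathcal E(G_p)=(p+1)(p-2)+\mathcal E(B_p)$. So it suffices to compute $\mathcal E(B_p)$ by summing absolute values of the seven eigenvalue families listed in Theorem~\ref{thm:fullreducedspectrum}. The only care needed is with the signs of the quadratic pairs $\lambda_\pm$ and $\mu_\pm$.

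\textbf{Step 1: the quadratic pairs.} For $\lambda_\pm$, I would use the product $\lambda_+\lambda_-=-(p-1)(2p^2+p-2)<0$ from the proof of Corollary~\ref{cor:exactnullityrank}. This shows $\lambda_+>0>\lambda_-$, so
$$|\lambda_+|+|\lambda_-|=\lambda_+-\lambda_-=\sqrt{\Delta_p}.$$
Likewise $\mu_+\mu_-=-(p-1)(p^2-2)<0$ gives $|\mu_+|+|\mu_-|=\sqrt{\Theta_p}$. Since each $\mu_\pm$ has multiplicity $p$, this pair contributes $p\sqrt{\Theta_p}$. Here I need $\Theta_p>0$, which is clear for $p\ge3$.

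\textbf{Step 2: the integer eigenvalues.} These contribute
$$p\cdot p(p-1)+(p-1)\frac{p(p-1)}{2}+(p-1)\frac{(p+1)(p-2)}{2}.$$
The last two terms combine to $(p-1)(p^2-p-1)=p^3-2p^2+1$. Adding $p^3-p^2$ and the forced term $(p+1)(p-2)=p^2-p-2$ gives the polynomial part $2p^3-2p^2-p-1$, as stated.

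\textbf{Step 3: asymptotics.} Since $\sqrt{\Delta_p}=2p^2+O(1)$ and $p\sqrt{\Theta_p}=p^3+O(1)$, we get
$$\mathcal E(G_p)=2p^3+p^3+O(p^2)=3p^3+O(p^2).$$

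The main obstacle is only the sign bookkeeping in Step~1, which the negative products settle. The rest is routine arithmetic.
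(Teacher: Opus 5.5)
Your proposal is correct and follows the paper's proof essentially verbatim. Both arguments reduce to $\mathcal E(B_p)$ via Theorem~\ref{thm:energyoddprime}, use the negative products $\lambda_+\lambda_-$ and $\mu_+\mu_-$ to obtain $\sqrt{\Delta_p}+p\sqrt{\Theta_p}$, and add the integer eigenvalues with their multiplicities. Your asymptotic step is also slightly more precise than the paper's one-line remark, since it attributes $2p^3$ to the polynomial part and $p^3$ to $p\sqrt{\Theta_p}$.
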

	
	\begin{proof}
		The two quotient eigenvalues $\lambda_+$ and $\lambda_-$ have opposite signs because their product is $-(p-1)(2p^2+p-2)$. Therefore
		$$
		|\lambda_+|+|\lambda_-|=\sqrt{4p^4-4p^2-8p+9}.
		$$
		Likewise, $\mu_+\mu_-=-(p-1)(p^2-2)<0$, so
		$$
		|\mu_+|+|\mu_-|=\sqrt{p^4-8p+8}.
		$$
		Using the multiplicities in Theorem~\ref{thm:fullreducedspectrum}, we obtain
		$$
		\begin{aligned}
			\mathcal E(B_p)
			={}&\sqrt{4p^4-4p^2-8p+9}
			+p\sqrt{p^4-8p+8}\\
			&+p^2(p-1)
			+\frac{p(p-1)^2}{2}
			+\frac{(p+1)(p-2)(p-1)}{2}\\
			={}&\sqrt{4p^4-4p^2-8p+9}
			+p\sqrt{p^4-8p+8}
			+(p-1)(2p^2-p-1).
		\end{aligned}
		$$
		Now apply Theorem~\ref{thm:energyoddprime} and use $(p+1)(p-2)=p^2-p-2$. This gives the claimed formula. The asymptotic statement follows directly from the two square-root terms.
	\end{proof}
	
	\medskip
	Next, we have the lower bound for 
	\begin{theorem}\label{thm:hyperodd}
		For every odd prime $p$, the graph $G_p$ is hyperenergetic; that is,
		$$
		\mathcal E(G_p)>2\bigl(|V(G_p)|-1\bigr)=\mathcal E\bigl(K_{|V(G_p)|}\bigr).
		$$
	\end{theorem}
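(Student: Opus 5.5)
We will derive hyperenergeticity directly from the closed formula in Theorem~\ref{thm:exactenergyodd}, comparing it against $2(|V(G_p)|-1)$ with $|V(G_p)|=p^3+p^2-p-1$ from Lemma~\ref{lem:classsize}. The target quantity is
$$
2\bigl(|V(G_p)|-1\bigr)=2p^3+2p^2-2p-4 .
$$
Subtracting the polynomial part $2p^3-2p^2-p-1$ of the energy, the claim reduces to the inequality
$$
\sqrt{4p^4-4p^2-8p+9}+p\sqrt{p^4-8p+8}>4p^2-p-3 .
$$

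The plan is to bound each square root from below by an explicit polynomial. For the first radical, we compare $4p^4-4p^2-8p+9$ with $(2p^2-1)^2=4p^4-4p^2+1$. The difference is $8-8p<0$, so this comparison fails, and we use instead $(2p^2-2)^2=4p^4-8p^2+4$. Its difference from the radicand is $4p^2-8p+5>0$, which gives $\sqrt{\Delta_p}>2p^2-2$. For the second radical, $\Theta_p=p^4-8p+8$ exceeds $(p^2-1)^2=p^4-2p^2+1$ by $2p^2-8p+7$. This is positive for $p\ge 3$, since its discriminant is $64-56=8$ and its roots are below $2.8$. Hence $p\sqrt{\Theta_p}>p^3-p$.

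Adding the two bounds, the left-hand side exceeds $p^3+2p^2-p-2$. It then suffices to check
$$
p^3+2p^2-p-2\ge 4p^2-p-3,
$$
that is, $p^3-2p^2+1\ge 0$. This holds for all $p\ge 3$, because $p^2(p-2)\ge 9>0$. The strictness of the square-root bounds gives the strict inequality.

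The main obstacle is only choosing the polynomial approximations sharply enough, which is easy here because the leading term $p^3$ gives ample slack. As a sanity check, for $p=3$ the energy is $32+\sqrt{273}+3\sqrt{65}\approx 32+16.52+24.19=72.71$, which exceeds $2\cdot 31=62$.
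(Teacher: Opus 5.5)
Your proof is correct and follows the paper's argument. Both start from the closed formula of Theorem~\ref{thm:exactenergyodd}, reduce to $\sqrt{\Delta_p}+p\sqrt{\Theta_p}>4p^2-p-3$, and use $\sqrt{\Delta_p}>2p^2-2$. The only difference is cosmetic: you bound $p\sqrt{\Theta_p}>p^3-p$ via $(p^2-1)^2<\Theta_p$, whereas the paper uses the cruder $p\sqrt{\Theta_p}>2p^2$, which already leaves a margin of $p+1$.
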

	
	\begin{proof}
		Since $|V(G_p)|=p^3+p^2-p-1$, Theorem~\ref{thm:exactenergyodd} gives
		$$
		\mathcal E(G_p)-2\bigl(|V(G_p)|-1\bigr)
		=\sqrt{\Delta_p}+p\sqrt{\Theta_p}-4p^2+p+3,
		$$
		where $\Delta_p=4p^4-4p^2-8p+9$ and $\Theta_p=p^4-8p+8$. For $p\ge3$,
		$$
		\Delta_p-(2p^2-2)^2=4(p-1)^2+1>0,
		$$
		so $\sqrt{\Delta_p}>2p^2-2$. Also
		$$
		\Theta_p-4p^2=p^2(p^2-4)-8p+8\ge5p^2-8p+8>0,
		$$
		and therefore $p\sqrt{\Theta_p}>2p^2$. It follows that
		$$
		\mathcal E(G_p)-2\bigl(|V(G_p)|-1\bigr)>p+1>0.
		$$
		Thus every odd-prime graph in the family is hyperenergetic.
	\end{proof}
	
	\medskip
	The preceding reduction also converts the equitable quotient from Proposition~\ref{prop:equitable} into an energy lower bound.
	
	\begin{corollary}\label{cor:energyquotient}
		For every odd prime $p$,
		$$
		\mathcal E(G_p)\ge (p+1)(p-2)+\sqrt{4p^4-4p^2-8p+9}.
		$$
	\end{corollary}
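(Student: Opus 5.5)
The bound follows by combining the energy reduction of Theorem~\ref{thm:energyoddprime} with the spectrum of the quotient matrix $Q_p$ from Proposition~\ref{prop:equitable}. By Theorem~\ref{thm:energyoddprime},
$$
\mathcal E(G_p)=(p+1)(p-2)+\mathcal E(B_p).
$$
It therefore suffices to show that $\mathcal E(B_p)\ge|\lambda_+|+|\lambda_-|=\sqrt{4p^4-4p^2-8p+9}$, where $\lambda_\pm$ are the eigenvalues of $Q_p$.

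\emph{Step 1: $\lambda_+$ and $\lambda_-$ are eigenvalues of $B_p$.} The two-dimensional space of type-matrices that take one constant value on the diagonal types and another on the off-diagonal types is $B_p$-invariant. On this space $B_p$ acts by $Q_p$, as already observed in the proof of Theorem~\ref{thm:fullreducedspectrum}. Equivalently, the partition $\calD\sqcup\calO$ is equitable, so the eigenvectors of $Q_p$ lift to class-constant eigenvectors of $A_p$ lying in $U$. Hence $\lambda_+$ and $\lambda_-$ both occur in the spectrum of $B_p$.

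\emph{Step 2: evaluate $|\lambda_+|+|\lambda_-|$.} We have $\lambda_+\lambda_-=\det Q_p=-(p-1)(2p^2+p-2)<0$, so the two eigenvalues have opposite signs. Therefore
$$
|\lambda_+|+|\lambda_-|=\lambda_+-\lambda_-=\sqrt{\Delta_p}=\sqrt{4p^4-4p^2-8p+9}.
$$

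\emph{Step 3: conclude.} Every other eigenvalue of $B_p$ contributes a nonnegative amount $|\mu|$ to $\mathcal E(B_p)$. Discarding these terms gives $\mathcal E(B_p)\ge\sqrt{\Delta_p}$, and substituting into the reduction formula yields the stated inequality.

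The only point that needs care is Step 1: one must know that both quotient eigenvalues genuinely appear in the spectrum of $B_p$, not only $\lambda_+$. This is exactly the invariant-subspace statement in the proof of Theorem~\ref{thm:fullreducedspectrum}, so it may be quoted directly. Alternatively, the corollary follows immediately from the exact energy formula of Theorem~\ref{thm:exactenergyodd}, since the omitted terms there are all nonnegative.
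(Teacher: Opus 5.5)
Your proposal is correct and is essentially the paper's argument. The paper likewise lifts the eigenvectors of the equitable quotient $Q_p$ into the class-constant subspace $U$ to conclude that $\lambda_\pm$ are eigenvalues of $B_p$, uses $\lambda_+>0>\lambda_-$ to get $|\lambda_+|+|\lambda_-|=\sqrt{4p^4-4p^2-8p+9}$, and adds the forced contribution $(p+1)(p-2)$ from Theorem~\ref{thm:energyoddprime}. Your determinant check for the opposite signs supplies a step the paper only asserts, and your alternative via Theorem~\ref{thm:exactenergyodd} is also valid because all omitted terms there are nonnegative.
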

	
	\begin{proof}
		By Proposition~\ref{prop:equitable}, the quotient matrix
		$$
		Q_p=
		\begin{pmatrix}
			p-2 & 2p(p-1)\\[1mm]
			2(p-1) & (2p-1)(p-1)
		\end{pmatrix}
		$$
		has eigenvalues
		$$
		\lambda_{\pm}
		=
		p^2-p-\frac12
		\pm
		\frac12\sqrt{4p^4-4p^2-8p+9}.
		$$
		The corresponding lifted quotient eigenvectors are constant on the type classes, so they lie in the class-constant subspace $U$. Hence $\lambda_+$ and $\lambda_-$ are eigenvalues of the reduced matrix $B_p$. Since $\lambda_+>0>\lambda_-$,
		$$
		|\lambda_+|+|\lambda_-|
		=
		\lambda_+-\lambda_-
		=
		\sqrt{4p^4-4p^2-8p+9}.
		$$
		Combining this with Theorem~\ref{thm:energyoddprime} gives the required inequality.
	\end{proof}
	
	\medskip
	A second lower bound follows from the spectral second moment of the reduced
	matrix.
	
	\begin{proposition}\label{prop:energymoment}
		For every odd prime $p$,
		$$
		\tr(B_p^2)=2|E(G_p)|-(p+1)(p-2)
		=(p+1)(2p^4-p^3-3p^2-p+4).
		$$
	\end{proposition}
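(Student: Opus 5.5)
We prove both equalities by combining the trace identity for $A_p$ with the spectral factorization of Theorem~\ref{thm:charfactor}, then compute $\tr(B_p^2)$ directly as a check.

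\textbf{First equality.} Since $A_p$ is a symmetric $(0,1)$ matrix with zero diagonal, $\tr(A_p^2)$ is the sum of its squared eigenvalues, and this sum equals $2|E(G_p)|$. By Theorem~\ref{thm:charfactor}, the spectrum of $A_p$ consists of:
\begin{itemize}
\item the spectrum of $B_p$;
\item $p(p+1)(p-2)$ zeros;
\item $(p+1)(p-2)$ copies of $-1$.
\end{itemize}
The zeros add nothing to the sum of squares and each $-1$ adds $1$. Hence
$$
\tr(A_p^2)=\tr(B_p^2)+(p+1)(p-2),
$$
which gives $\tr(B_p^2)=2|E(G_p)|-(p+1)(p-2)$. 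The same decomposition works with the invariant subspaces of part (i) directly: $A_p$ restricted to $U$ is similar to $B_p$, so $\tr\bigl((A_p|_U)^2\bigr)=\tr(B_p^2)$ even though $B_p$ is not symmetric in the basis $\{\one_{L,M}\}$.

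\textbf{Closed form.} Substitute Corollary~\ref{cor:edges}, $2|E(G_p)|=(p^2-1)(2p^3+p^2-2p-2)$, and factor out $p+1$. This leaves
$$
(p-1)(2p^3+p^2-2p-2)-(p-2),
$$
which we expand and compare with $2p^4-p^3-3p^2-p+4$. The expansion gives $2p^4-p^3-3p^2+0\cdot p+2-p+2$, so the coefficients agree.

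\textbf{Independent check.} We also compute $\tr(B_p^2)=\sum_{\alpha,\beta}(B_p)_{\alpha\beta}(B_p)_{\beta\alpha}$ from the block description, using that $H_p$ is symmetric.
\begin{itemize}
\item \emph{Diagonal entries.} There are $p+1$ diagonal types with entry $p-2$. There are $p(p+1)$ off-diagonal types with entry $0$.
\item \emph{Off-diagonal entries.} These equal $(p-1)$ times the off-diagonal entries of $H_p$. Their squared sum is $(p-1)^2$ times the number of ordered incident pairs of distinct types. By the proof of Corollary~\ref{cor:degrees}, this count is $p(p+1)(2p+1)+(p+1)\cdot 2p$.
\end{itemize}
Adding these contributions should reproduce the same polynomial.

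The only real obstacle is bookkeeping, namely getting the incidence counts and the polynomial expansion right. The spectral argument itself is immediate from Theorem~\ref{thm:charfactor}.
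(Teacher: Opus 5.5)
Your proposal is correct and follows the paper's proof: both obtain $\tr(A_p^2)=\tr(B_p^2)+(p+1)(p-2)$ from the factorization in Theorem~\ref{thm:charfactor}, use $\tr(A_p^2)=2|E(G_p)|$, and expand with Corollary~\ref{cor:edges}. Your direct check, which you left unfinished, does close: $(p+1)(p-2)^2+(p-1)^2p(p+1)(2p+3)$ reproduces the same polynomial. One aside needs correcting. $B_p=(p-1)H_p-D_p$ is in fact symmetric, since all classes have size $p-1$ and $H_p$ is symmetric, as you yourself use later; this is harmless because only the eigenvalues enter the argument.
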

	
	\begin{proof}
		By Theorem~\ref{thm:charfactor}, the spectrum of $A_p$ consists of the spectrum of $B_p$, together with $p(p+1)(p-2)$ zeros and $(p+1)(p-2)$ copies of $-1$. Therefore
		$$
		\tr(A_p^2)=\tr(B_p^2)+(p+1)(p-2).
		$$
		Since $A_p$ is the adjacency matrix of a simple graph, $\tr(A_p^2)=2|E(G_p)|$. Using Corollary~\ref{cor:edges},
		$$
		\tr(B_p^2)
		=(p^2-1)(2p^3+p^2-2p-2)-(p+1)(p-2),
		$$
		which simplifies to
		$$
		\tr(B_p^2)=(p+1)(2p^4-p^3-3p^2-p+4).
		$$
	\end{proof}
	
	\medskip
	The following is the immediate consequence of Theorem \ref{thm:energyoddprime}.
	
	\begin{corollary}\label{cor:energymoment}
		Let $p$ be an odd prime. Then
		$$
		\mathcal E(G_p)\ge
		(p+1)(p-2)
		+
		\frac{(p+1)(2p^4-p^3-3p^2-p+4)}
		{p^2-p-\frac12+\frac12\sqrt{4p^4-4p^2-8p+9}}.
		$$
		The displayed lower bound equals $p^3+O(p^2)$ as $p\to\infty$; in particular, $\mathcal E(G_p)=\Omega(p^3)$.
	\end{corollary}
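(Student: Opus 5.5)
The plan is to combine Theorem~\ref{thm:energyoddprime} with the general moment inequality of Proposition~\ref{prop:energybasic}, applied to the reduced matrix $B_p$ instead of $A_p$.

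First, by Theorem~\ref{thm:energyoddprime}, $\mathcal E(G_p)=(p+1)(p-2)+\mathcal E(B_p)$. So it suffices to bound $\mathcal E(B_p)=\|B_p\|_*$ from below.

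The matrix $B_p=(p-1)H_p-D_p$ is real symmetric, since $H_p$ is symmetric by the symmetric incidence criterion and $D_p$ is diagonal. It is also nonzero. Hence the first inequality of Proposition~\ref{prop:energybasic} gives
$$
\|B_p\|_*\ge \frac{\tr(B_p^2)}{\rho(B_p)}.
$$
The numerator is supplied exactly by Proposition~\ref{prop:energymoment}:
$$
\tr(B_p^2)=(p+1)(2p^4-p^3-3p^2-p+4).
$$

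For the denominator I need $\rho(B_p)=\lambda_+$. There are two routes.
\begin{enumerate}[label=\textup{(\alph*)},noitemsep]
\item By Theorem~\ref{thm:charfactor}, every eigenvalue of $B_p$ is an eigenvalue of $A_p$. Hence $\rho(B_p)\le\rho(A_p)=\lambda_+$ by Theorem~\ref{thm:specradius}. This upper bound is all that is needed, since a smaller denominator only strengthens the inequality and replacing $\rho(B_p)$ by $\lambda_+$ is legitimate.
\item Alternatively, compare $\lambda_+$ directly with the other eigenvalues of $B_p$ listed in Theorem~\ref{thm:fullreducedspectrum}.
\end{enumerate}
Route (a) avoids any comparison of eigenvalues, so I would use it. Substituting gives the displayed bound.

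For the asymptotics, the numerator is $2p^5+O(p^4)$. Since $\sqrt{4p^4+O(p^2)}=2p^2+O(1)$, the denominator $\lambda_+$ equals $2p^2+O(p)$. So the quotient is $p^3+O(p^2)$. Adding $(p+1)(p-2)=O(p^2)$ leaves $p^3+O(p^2)$, and hence $\mathcal E(G_p)=\Omega(p^3)$.

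The only delicate points are the following.
\begin{itemize}[noitemsep]
\item The justification $\rho(B_p)\le\rho(A_p)$ must come from the characteristic polynomial factorization, not from a submatrix argument, since $B_p$ is not a principal submatrix of $A_p$.
\item The expansion of $\lambda_+$ needs some care: writing $\lambda_+=p^2-p+O(1)+p^2+O(1)$ confirms the leading term $2p^2$. I expect this expansion to be the main, though mild, obstacle.
\end{itemize}
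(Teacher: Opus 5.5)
Your proof is correct and matches the paper's argument: it reduces to $\mathcal E(B_p)$ via Theorem~\ref{thm:energyoddprime}, applies the trace-over-spectral-radius inequality to the symmetric matrix $B_p$, and takes the numerator from Proposition~\ref{prop:energymoment}. The only difference is in the denominator: the paper asserts $\rho(B_p)=\rho(A_p)$ exactly, because the lifted Perron vector lies in the class-constant subspace, whereas you use only $\rho(B_p)\le\rho(A_p)$ from the factorization in Theorem~\ref{thm:charfactor}, which is all the inequality needs.
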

	
	\begin{proof}
		By Theorem~\ref{thm:energyoddprime}, we have
		$$
		\mathcal E(G_p)=(p+1)(p-2)+\mathcal E(B_p).
		$$
		By Theorem~\ref{thm:specradius} and Proposition~\ref{prop:equitable}, $\rho(A_p)$ is an eigenvalue of $B_p$, and we have
		$$
		\rho(B_p)=\rho(A_p)
		=
		p^2-p-\frac12+\frac12\sqrt{4p^4-4p^2-8p+9}.
		$$
		Applying the symmetric-matrix inequality in Proposition~\ref{prop:energybasic} to $B_p$, and using Proposition~\ref{prop:energymoment}, we obtain
		$$
		\mathcal E(B_p)\ge \frac{\tr(B_p^2)}{\rho(B_p)}
		=
		\frac{(p+1)(2p^4-p^3-3p^2-p+4)}
		{p^2-p-\frac12+\frac12\sqrt{4p^4-4p^2-8p+9}}.
		$$
		Now, with $(p+1)(p-2)$, we obtain the required bound. Expanding the denominator gives $\rho(A_p)=2p^2-2p-1+O(p^{-1})$, so the displayed lower bound is $p^3+O(p^2)$.
	\end{proof}
	
	\medskip
	Corollary~\ref{cor:energymoment} is asymptotically stronger than the quotient bound in Corollary~\ref{cor:energyquotient}: the two quotient eigenvalues contribute only order $p^2$, whereas the second-moment estimate shows that the reduced energy is already of order $p^3$.

	\medskip
	The base case $n=2$ is explicit, while Theorem~\ref{thm:clique2power} supplies a general two-adic lower bound. Because graph energy is not monotone under taking subgraphs, the clique is used indirectly through the spectral radius.	
	\begin{theorem}\label{thm:energytwopower}
		For the two-adic family, $\mathcal E(G_2)=10$, and for every $t\ge 2$,
		$$
		\mathcal E(G_{2^t})\ge 2^{\,4\lfloor t/2\rfloor+1}-4.
		$$
	\end{theorem}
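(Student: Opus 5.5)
The plan has two independent parts: an exact computation at $n=2$ and a spectral-radius bound for $t\ge2$.

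\emph{The case $n=2$.} Theorem~\ref{thm:n2} gives the full characteristic polynomial
$$
\chi_{A_2}(\lambda)=(\lambda-3)(\lambda+2)(\lambda-1)^2(\lambda+1)^3 .
$$
So the spectrum of $A_2$ is $\{3,-2,1^{[2]},(-1)^{[3]}\}$. Summing absolute values gives
$$
\mathcal E(G_2)=3+2+2\cdot 1+3\cdot 1=10.
$$
As a sanity check, the positive eigenvalues sum to $5$, which is half of $10$, consistent with $\tr A_2=0$.

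\emph{The case $t\ge 2$.} Energy is not monotone under taking subgraphs, so the clique found in Theorem~\ref{thm:clique2power} cannot be compared with $\mathcal E(G_{2^t})$ directly. Instead I route the argument through the spectral radius, using the graph inequality $\mathcal E(G)\ge 2\rho(A(G))$ from Proposition~\ref{prop:energybasic}. That inequality needs $G_{2^t}$ to have at least one edge, which holds because it contains $K_{c_t}$ with $c_t=2^{4\lfloor t/2\rfloor}-1\ge 15$. Corollary~\ref{cor:2powerbounds} then gives $\rho(A_{2^t})\ge c_t-1=2^{4\lfloor t/2\rfloor}-2$. Combining the two,
$$
\mathcal E(G_{2^t})\ge 2\bigl(2^{4\lfloor t/2\rfloor}-2\bigr)=2^{4\lfloor t/2\rfloor+1}-4,
$$
which is the claimed bound.

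There is no substantial obstacle; the only point needing care is justifying $\mathcal E\ge2\rho$. Since $\tr A=0$, the energy equals twice the sum of the positive eigenvalues, and $\rho$ is one of those positive eigenvalues. Proposition~\ref{prop:energybasic} already records this. For odd $t$ one could get a sharper bound from Corollary~\ref{cor:oddtexplicit}, but the stated uniform bound needs only the clique estimate.
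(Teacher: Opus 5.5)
Your proposal is correct and follows the paper's proof essentially verbatim. The base case is read off from the characteristic polynomial in Theorem~\ref{thm:n2}, and for $t\ge2$ the clique bound $\rho(A_{2^t})\ge c_t-1$ from Corollary~\ref{cor:2powerbounds} is combined with $\mathcal E\ge 2\rho$ from Proposition~\ref{prop:energybasic}. Your explicit check that $G_{2^t}$ has an edge is a small extra detail the paper leaves implicit.
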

	
	\begin{proof}
		By Theorem~\ref{thm:n2},	$\mathcal E(G_2)=3+2+1+1+1+1+1=10.$ Now, let $t\ge 2$. Then by Corollary~\ref{cor:2powerbounds}, we have
		$$
		\rho(A_{2^t})\ge c_t-1=2^{4\lfloor t/2\rfloor}-2.
		$$
		Now, Proposition~\ref{prop:energybasic} implies that
		$$
		\mathcal E(G_{2^t})\ge 2\rho(A_{2^t})
		\ge 2\bigl(2^{4\lfloor t/2\rfloor}-2\bigr)
		=2^{\,4\lfloor t/2\rfloor+1}-4.
		$$
	\end{proof}
	
	\begin{remark}
		Theorem~\ref{thm:energytwopower} deliberately passes through the spectral radius rather than comparing energies directly, because graph energy is not monotone under taking subgraphs in general \cite{li2012energy,nikiforov2016energy}. The clique block forces a large spectral radius, and Proposition~\ref{prop:energybasic} converts that information into an energy lower bound.
	\end{remark}

	For compact notation in the Tables \ref{tab:energyodd} and \ref{tab:energy2power}, define
	$$
	L_p^{(Q)}:=(p+1)(p-2)+\sqrt{4p^4-4p^2-8p+9},
	$$
	and
	$$
	L_p^{(M)}:=
	(p+1)(p-2)
	+
	\frac{(p+1)(2p^4-p^3-3p^2-p+4)}
	{p^2-p-\frac12+\frac12\sqrt{4p^4-4p^2-8p+9}}.
	$$
	Tables~\ref{tab:energyodd} and \ref{tab:energy2power} compare the resulting bounds with small numerical cases. All decimal values in these tables and in Examples~\ref{ex:energyp3} and \ref{ex:energy4} were computed with Python 3 and NumPy 2.3.5 in double precision. For $p=3,5,7$, the symmetric reduced matrix $B_p=(p-1)H_p-D_p$ was also formed directly from the type-incidence rule and diagonalized numerically as an independent check. The resulting values agree with the closed formula in Theorem~\ref{thm:exactenergyodd}. For $n=4$, all $4^4=256$ quaternion elements were enumerated, the $127$ nonzero zero divisors were retained, and the $127\times127$ adjacency matrix was formed using the test $xy=0$ or $yx=0$ before symmetric diagonalization. The reported decimals are rounded to four places.
	
	\begin{table}[H]
		\centering
		\caption{Energy data for odd primes. The last column is the numerical evaluation of the exact formula in Theorem~\ref{thm:exactenergyodd}; direct diagonalization of $B_p$ gives the same values.}
		\label{tab:energyodd}
		\begin{tabular}{cccccc}
			\toprule
			$p$ & $|V(G_p)|$ & forced part $(p+1)(p-2)$ & $L_p^{(Q)}$ & $L_p^{(M)}$ & numerical $\mathcal E(G_p)$ \\
			\midrule
			$3$ & $32$  & $4$  & $20.5227$  & $35.6829$  & $72.7095$ \\
			$5$ & $144$ & $18$ & $66.6724$  & $161.5800$ & $364.4303$ \\
			$7$ & $384$ & $40$ & $136.7523$ & $423.5501$ & $1016.3064$ \\
			\bottomrule
		\end{tabular}
	\end{table}
	
	\begin{table}[H]
		\centering
		\caption{Energy values and lower bounds in the two-adic family.}
		\label{tab:energy2power}
		\small
		\begin{tabular}{cccc}
			\toprule
			$t$ & $n=2^t$ & $|V(G_{2^t})|$ & energy information \\
			\midrule
			$1$ & $2$  & $7$     & $\mathcal E(G_2)=10$ \\
			$2$ & $4$  & $127$   & $\mathcal E(G_4)\ge 28$; numerically $102.8092$ \\
			$3$ & $8$  & $2047$  & $\mathcal E(G_8)\ge 28$ \\
			$4$ & $16$ & $32767$ & $\mathcal E(G_{16})\ge 508$ \\
			\bottomrule
		\end{tabular}
	\end{table}
	
	\begin{example}\label{ex:energyp3}
		For $p=3$, Theorem~\ref{thm:exactenergyodd} gives $\mathcal E(G_3)=32+\sqrt{273}+3\sqrt{65}\approx72.7095.$ Equivalently, Theorem~\ref{thm:energyoddprime} gives $\mathcal E(G_3)=4+\mathcal E(B_3)$. The quotient lower bound from Corollary~\ref{cor:energyquotient} is $L_3^{(Q)}=4+\sqrt{273}\approx 20.5227,$ 	while the stronger moment lower bound from
		Corollary~\ref{cor:energymoment} is $L_3^{(M)}\approx 35.6829.$ Since $|V(G_3)|=32$ and the complete graph $K_{32}$ has energy $\mathcal E(K_{32})=2(32-1)=62,$ the graph $G_3$ is hyperenergetic (when energy exceeds that of a complete graph with the same number of vertices).
	\end{example}
	\begin{example}\label{ex:energy4}
		For $n=4$, direct construction of the $127\times 127$ adjacency matrix
		shows that $\rho(A_4)\approx 22.8577,$ and $\mathcal E(G_4)\approx 102.8092.$  Thus, the general lower bound
		$\mathcal E(G_4)\ge 28$ is nontrivial but far from sharp. This indicates that, within the two-adic family, the clique block encompasses merely a segment of the spectrum, and a more refined block decomposition remains absent.
	\end{example}
	
	The energy data indicate a significant disparity between the two regimes. In the odd-prime family, Theorem~\ref{thm:hyperodd} shows that hyperenergeticity holds for every odd prime, not only for the displayed numerical cases. In the two-adic family, the exact base graph $G_2$ is not hyperenergetic, and the available general estimates still come from forced clique and annihilator-layer structure rather than a complete spectral reduction. This further illustrates the structural distinction between the matrix-ring model $\LL_p\cong M_2(\F_p)$ and the more singular behavior of $\LL_{2^t}$.

	\section{Conclusion}\label{con}
	The paper develops an adjacency-matrix description of zero-divisor graphs arising from Lipschitz quaternion rings modulo $n$. For an odd prime $p$, the kernel--image classification of nonzero singular matrices in $M_2(\F_p)$ yields an explicit block model for $A_p$. Besides the degree and edge formulas, the block geometry gives the exact clique number $p+1$ and the exact independence number $(p-1)(p+1)^2/4$. The class-constant reduction first isolates the large forced $0$- and $-1$-eigenspaces, and a second symmetry decomposition diagonalizes the reduced matrix $B_p$ completely. As a result, the full characteristic polynomial, nullity, rank, spectral radius, and energy are explicit. In particular, every odd-prime graph $G_p$ is hyperenergetic.
	
	For the two-adic family, the graph at $n=2$ is completely determined and is isomorphic to the friendship graph $F_3$. For $t\ge2$, the ideal $2^{\lceil t/2\rceil}\LL_{2^t}$ is square-zero, and its nonzero elements form a clique of explicitly known size. The nested ideals $2^a\LL_{2^t}$ provide a second mechanism: when $a+b\ge t$, disjoint ideal layers are completely joined and yield large complete bipartite subgraphs. These constructions give spectral-radius and edge bounds, while the clique principal block also forces negative inertia and rank. Unlike the odd-prime case, however, a full block diagonalization of $A_{2^t}$ is still not available.
	
	Several directions remain open. The odd-prime calculation suggests that ring-theoretic strata should be identified before attempting numerical diagonalization, but prime powers and composite moduli introduce additional annihilator layers that are not visible over a field.
	
	We conclude with three separate questions.
	\begin{openproblem}
		Let $n$ be odd and squarefree with at least two prime factors. Use the Chinese remainder decomposition of $\LL_n$ to determine a block model for $A(G_n)$ and decide whether its characteristic polynomial can be expressed explicitly in terms of the prime components.
	\end{openproblem}
	
	\begin{openproblem}
		Determine the full adjacency spectrum of $\Gamma(\LL_{p^t})$ for odd primes $p$ and integers $t\ge2$.
	\end{openproblem}
	
	\begin{openproblem}
		For $t\ge2$, determine the exact clique number and the exact characteristic polynomial of $A_{2^t}$. The clique in Theorem~\ref{thm:clique2power} gives a general lower bound, but its maximality is not settled here.
	\end{openproblem}
	
	The block and quotient methods used above may also be useful for other zero-divisor graphs of finite noncommutative rings.
	\section*{Declarations}
	\noindent \textbf{Data Availability:}	There is no data associated with this article as data sharing is not applicable since no data sets were generated or analyzed during the current study.
	
	\noindent \textbf{Funding:} The authors declare that no funds, grants, or other support were received during the preparation of this manuscript.
	
	\noindent \textbf{Conflict of interest:} The authors have no competing interests to declare that are relevant to the content of this article.
	
	
	\noindent\textbf{\large Acknowledgement}
	All numerical evaluations were performed in Python 3 utilizing the NumPy library (v2.3.5). Figure~\ref{fig:G3-joined-union-manual} was generated via TikZ, with the underlying code produced using online AI-assisted tools.

\end{document}